\definecolor{pinegreen}{rgb}{0.0, 0.47, 0.44}
\definecolor{brilliantrose}{rgb}{1.0, 0.33, 0.64}
\numberwithin{equation}{section}   
\newtheorem{theorem}{Theorem}[section]
\newtheorem{lemma}[theorem]{Lemma}
\newtheorem{proposition}[theorem]{Proposition}
\newtheorem{corollary}[theorem]{Corollary}
\theoremstyle{definition}
\newtheorem{remark}[theorem]{Remark}
\newcommand*\bigcdot{\mathpalette\bigcdot@{.5}}
\newcommand*\bigcdot@[2]{\mathbin{\vcenter{\hbox{\scalebox{#2}{$\m@th#1\bullet$}}}}}
\newcommand{\R}{\mathbb{R}}
\DeclareMathOperator{\tr}{tr}             
\newdimen\deltay
\def\Ddot#1#2(#3,#4,#5,#6){\deltay=#6\setbox1=\hbox to0pt{\smash{\dotcnt=1
\kern#3\loop\raise\dotcnt\deltay\hbox to0pt{\hss#2}\kern#5\ifnum\dotcnt<#1
\advance\dotcnt 1\repeat}\hss}\setbox2=\vtop{\box1}\ht2=#4\box2}
\newcommand{\norm}[1]{\left\lVert#1\right\rVert}
\newcommand{\Red}{\color{red}}
\def\Blue{\color{blue}}
\pgfplotsset{compat = newest}
\def\author@andify{%
  \nxandlist {\unskip ,\penalty-1 \space\ignorespaces}%
    {\unskip {} \@@and~}%
    {\unskip \penalty-2 \space \@@and~}%
}
\title[]{
Weak type (1,1) jump inequalities \\ in a nonsymmetric Gaussian setting}
\author{Valentina Casarino}
\address{Valentina Casarino, DTG, Universit\`a degli Studi di Padova\\ Stradella san Nicola 3 \\I-36100 Vicenza \\ Italy}
\email{valentina.casarino@unipd.it}
\author{Paolo Ciatti}\address{Paolo Ciatti, Dipartimento di Matematica "Tullio Levi Civita", Universit\`a degli Studi di Padova\\Via Trieste, 63, 35131 Padova,  \\ Italy}
\email{paolo.ciatti@unipd.it}\author{Peter Sj\"ogren}\address{Peter Sj\"ogren, Mathematical Sciences,  University of Gothenburg and  Mathematical Sciences,
Chalmers University of Technology  \\ SE - 412 96 G\"oteborg, Sweden}\email{peters@chalmers.se}
\keywords{Jump quasi-seminorm,  variation seminorm, jump inequalities, Ornstein--Uhlenbeck semigroup,Mehler kernel}
\subjclass[2020]{
37A46,   	
37A30,
47D03, 
42B99
}
\thanks{The first and second authors are members of the Gruppo Nazionale per l'Analisi Matematica, la Probabilità e le loro Applicazioni (GNAMPA)
of the Istituto Nazionale di Alta Matematica (INdAM). 
This research was carried out while the third author was visiting  the University of Padova,  Italy, and he is grateful for its hospitality. 
The third author also profited from a grant from the Royal Swedish Academy of Sciences and another from the Royal Society of Arts and Sciences in Gothenburg.}
\date{\today}
\begin{document}

\maketitle

\begin{abstract}
We prove that
 the jump quasi-seminorm of order $\varrho= 2$ for a general Ornstein--Uhlenbeck semigroup $\left(\mathcal H_t\right)_{t>0}$ in $\R^n$
 defines an operator of weak type $(1,1)$ with respect to the invariant  measure. This provides an example of a weak-type jump inequality for a  nonsymmetric semigroup
in a nondoubling measure space.

 Our result may be seen as  an endpoint refinement of the weak type $(1,1)$ inequality for  the $\varrho$-th order variation seminorm of  $\left(\mathcal H_t\right)_{t>0}$, recently proved by the authors when $\varrho>2$, and disproved for $\varrho=2$. 

  \end{abstract}

\section{Introduction}\label{Introduction}
In this paper we study   jump and  variational 
bounds of   weak type $(1,1)$ for a  nonsymmetric Ornstein--Uhlenbeck semigroup,
 that is, for the  semigroup $(\mathcal H_t)_{t\ge 0}$ 
 generated by the operator
$$\mathcal L f(x)=\frac12\,\tr
\big(
Q\nabla^2
f\big)(x)+
\langle Bx, \nabla f(x) \rangle
\,,\qquad
f\in
\mathcal S (\R^n),\;\; x\in\R^n.
$$
Here $\nabla$ is the gradient and $\nabla^2$ the Hessian, 
and $n\ge 1$.
 The definition of $\mathcal L$ depends on two real $n\times n$ matrices, $Q$ and $B$; 
we assume that $Q$ is   symmetric and positive definite, and that all the eigenvalues of
$B$   have negative real parts.
In the following, $\R^n$ is  equipped
with a Gaussian measure denoted by $\gamma_\infty$, which 
is 
invariant  under the action of $\mathcal H_t$; see \eqref{def:inv_meas} 
for a precise definition of $\gamma_\infty$ in terms of $Q$ and $B$. This measure is locally but not globally doubling.
Under our assumptions on $Q$ and $B$, neither $\mathcal L$ nor $\mathcal H_t$ is symmetric in $L^2(\gamma_\infty)$ in general.

If $\mathcal I {\color{black}{\subseteq}} \R_+$ is an interval and $f \in L^1(\gamma_\infty)$,   the $\varrho$-th order variational seminorm of $\mathcal H_t f(x)$   on   $\mathcal I $ is defined by
           \begin{equation}\label{def:intro_rho2_OU}
  \|\mathcal H_t \,f(x)\|_{v(\varrho), \mathcal I} = \sup\, \left( \sum_{i=1}^N |\mathcal H_{t_i} f(x)- \mathcal H_{t_{i-1}}f(x)|^\varrho \right)^{1/\varrho},\qquad x\in \R^n,\quad
  1 \le \varrho < \infty,
\end{equation}
 the supremum being over all finite, increasing
 sequences $\left(t_i \right)_0^N$ of points in $\mathcal I$.
  We emphasize  that in this paper  the $v(\varrho)$ seminorm will be always taken in the variable~$t$.

 \hskip2pt
 
In a recent paper 
the authors proved that for $\varrho>2$  the variation operator mapping $f\in L^1(\gamma_\infty)$ to the function 
$$
x\mapsto \|\mathcal H_tf(x)\|_{v(\varrho),\R_+}
$$
is bounded 
from $L^1(\gamma_\infty)$ to $L^{1,\infty}(\gamma_\infty)$  (see \cite{CCS8}, and \cite{CCS6} for a different proof in the one-dimensional case).
 The   boundedness of  this variation operator on  $L^p(\gamma_\infty),\;1<p<\infty,$ 
 was already known 
 (see \cite[Corollary 4.5]{Le Merdy}, and also  \cite{Almeida}). 
            
These results break down for  $1\le \varrho\le 2$, as  proved in \cite[Section 8]{CCS8}.

Our main result in this paper is a  jump inequality of weak type $(1,1)$
for $ \mathcal H_t$ at  $\varrho = 2$.
   We therefore need to introduce  jump quasi-seminorms,
   which we do only in the  Ornstein--Uhlenbeck framework.

If $\mathcal I {\color{black}{\subseteq}} \mathcal R_+$ is an interval,  $\lambda>0$ and $f \in L^1(\gamma_\infty)$, the $\lambda$-jump counting function for 
$\mathcal H_t\, f$ in $\mathcal I$ at   $x\in\R^n$
is defined as
\begin{multline}\label{def:jump}
 N_\lambda (\mathcal H_t f(x):\:t\in\mathcal I)=     
 \sup\big\{
 J \in\mathbb N:\,
\text{  there exist points }\,
  t_1 < t_2 < \dots < t_J \text{ in $\mathcal I$ }   
 \\
\text{ such that }\,
|\mathcal H_{t_{i}} f(x) - \mathcal H_{t_{i-1}} f(x)| > \lambda
 \,\text{ for } \, i=2,\dots, J
 \big\}.
 \end{multline}
Then the jump quasi-seminorm  of order $\varrho$ of
$\mathcal H_t\, f$ in $\mathcal I$ is               
\begin{equation}\label{jump_seminorm_OU}
 J_\varrho^p  (\mathcal H_t f:\,t\in\mathcal I)= 
 \sup_{\lambda>0}
 \| \lambda \, \big(N_\lambda \big(\mathcal H_t f(\cdot):  t\in\mathcal I \big)\big)^{1/\varrho}\|_{L^p(\gamma_\infty)},
\quad 1 \le p,\varrho < \infty.
\end{equation}
Notice that the quasi-seminorms $ J_\varrho^p(\cdot) $ are decreasing  in $\varrho$.

One can define 
weak jump quasi-seminorms  in a similar way, 
by replacing the Lebesgue space $L^p(\gamma_\infty)$   in  \eqref{jump_seminorm_OU}  by a   Lorentz space $L^{p,q}(\gamma_\infty)$
(see  \cite[formula (1.2)]{Mirek1});
we are mainly interested in  the case $(p,q)=(1,\infty)$, where 
\begin{equation}\label{jump_seminorm_weak}
 J_\varrho^{1,\infty}  (\mathcal H_t f:\,t\in\mathcal I)=
 \sup_{\lambda>0}
 \| \lambda \, \big(N_\lambda \big(\mathcal H_t f(\cdot):  t\in\mathcal I \big)\big)^{1/\varrho}\|_{L^{1,\infty}(\gamma_\infty)}.
\end{equation}

Since \begin{align*}
\lambda\big( N_\lambda (\mathcal H_t f(x):\,\,t\in\mathcal I)\big)^{1/\varrho}\le   \|\mathcal H_t \,f(x)\|_{v(\varrho), \mathcal I}\,, \qquad  x\in\R^n,
\end{align*}
for all $\varrho\ge 1$ and  $\lambda>0$  (see   
\cite[formula (2.3)]{Mirek1}),
 the  jump quasi-seminorms
$ J_\varrho^p $ are dominated by 
the $L^p$ norms of 
variational seminorms of order $\varrho$, that is,
\begin{equation}\label{domination}
 J_\varrho^p  (\mathcal H_t f:\,t\in\mathcal I) \le \| \,\|\mathcal H_t \,f(\cdot) \|_{v(\varrho), \mathcal I}\|_{L^p(\gamma_\infty)},\qquad \varrho \ge 1,\quad p\ge 1.
\end{equation}
It also follows that
\begin{align*}
 \big|\{x\in\R^n &: \lambda\left( N_\lambda (\mathcal H_t f(x):\,t\in\mathcal I)\right)^{1/\varrho}>\alpha \}\big| \\
&\le
\left|\{x\in\R^n:  \|\mathcal H_t f(x) \|_{v(\varrho), \mathcal I}>\alpha \}\right|,\qquad \alpha > 0,
\end{align*}
and thus
\begin{equation}\label{weak_domination_OU}
 J_\varrho^{1,\infty} (\mathcal H_t f:\,t\in\mathcal I) \le \| \,\|\mathcal H_t f(\cdot)\|_{v(\varrho), \mathcal I}\|_{L^{1,\infty}(\gamma_\infty)},\qquad \varrho\ge 1.
\end{equation}

Notice that combining \eqref{domination} with the results in \cite[Corollary 4.5]{Le Merdy} and \cite[p.\ 31]{Almeida} 
on the strong boundedness $L^p-L^p$ of the operator
$f \mapsto  \| \mathcal H_t f\|_{v(\varrho),\Bbb R_+}$, for $1<p<\infty$ and $\varrho>2$
one immediately arrives at the following result.

\begin{proposition}
Let $1<p<\infty$ and $\varrho>2$.
Then the  jump quasi-seminorms  of the  Ornstein--Uhlenbeck semigroup 
 $\left(\mathcal H_t\right)_{t>0}$ are bounded on $L^p(\gamma_\infty)$, that is,
\begin{equation*}
 J_\varrho^p  (\mathcal H_t f:\,t>0){\color{black}{\le  C}}\, \|f\|_{L^p(\gamma_\infty)},
\end{equation*}
with ${\color{black}{C=  C(p,n,Q,B)}}$.
\end{proposition}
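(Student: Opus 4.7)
The plan is very short because the author has set up all the ingredients just above the statement. The proof is a direct chaining of two facts.

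First, I would apply the pointwise domination inequality \eqref{domination} with $\mathcal I = \mathbb R_+$, which gives
\begin{equation*}
J_\varrho^p(\mathcal H_t f:\,t>0) \,\le\, \bigl\| \,\|\mathcal H_t f(\cdot)\|_{v(\varrho),\R_+}\bigr\|_{L^p(\gamma_\infty)}
\end{equation*}
for every $\varrho\ge 1$ and every $p\ge 1$. This step is just the definition of the jump quasi-seminorm together with the elementary inequality $\lambda (N_\lambda)^{1/\varrho} \le \|\mathcal H_t f(x)\|_{v(\varrho),\mathcal I}$ recalled in the excerpt.

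Second, I would invoke the known $L^p$-boundedness of the variation operator of order $\varrho$ for the Ornstein--Uhlenbeck semigroup: for $1<p<\infty$ and $\varrho>2$, the author has cited \cite[Corollary 4.5]{Le Merdy} and \cite[p.\ 31]{Almeida} for
\begin{equation*}
\bigl\|\,\|\mathcal H_t f(\cdot)\|_{v(\varrho),\R_+}\bigr\|_{L^p(\gamma_\infty)} \,\lesssim\, \|f\|_{L^p(\gamma_\infty)}.
\end{equation*}
Combining the two displayed inequalities yields the claim.

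There is no genuine obstacle here: the only condition $\varrho>2$ enters precisely through the cited strong-type bound for the variation seminorm (which is known to fail at $\varrho=2$, as noted in the introduction), and the restriction $1<p<\infty$ comes from the same reference. Since the domination \eqref{domination} is monotone in $\varrho$ only in the trivial sense (larger $\varrho$ gives smaller $v(\varrho)$), there is nothing to optimize. The proposition is therefore essentially a corollary of \eqref{domination} and the results of Le Merdy and Almeida, and the proof amounts to writing these two lines.
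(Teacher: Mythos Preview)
Your proposal is correct and matches the paper's own argument exactly: the proposition is presented there as an immediate consequence of combining the domination \eqref{domination} with the cited strong $L^p$ bound for the variation operator from \cite[Corollary 4.5]{Le Merdy} and \cite[p.\ 31]{Almeida}. There is nothing further to add.
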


 On the other hand, for $\varrho>2$ one has
\begin{equation}\label{domination3}
\|\,\|\mathcal H_t \,f\|_{v(\varrho), \mathcal I}\|_{L^{1,\infty}(\gamma_\infty)}
\lesssim_{\varrho}
 J_2^1 (\mathcal H_t \,f:\,t\in\mathcal I) \le \| \,\|\mathcal H_t \,f \|_{v(2), \mathcal I}\|_{L^1(\gamma_\infty)},
\end{equation}
with the first inequality failing for $\varrho=2$. 
See \cite[formula (1.4)]{Mirek2} or  \cite[Lemma 2.3]{Mirek1} for the first inequality; the second is \eqref{domination} with $p=1$.
Thus a  jump inequality of weak type $(1,1)$ for $ \mathcal H_t \,f$ at $\varrho=2$
can be considered an endpoint refinement of the variational estimate.

\medskip

 We recall that jump and variational 
 inequalities
say   that  the jump quasi-seminorms  or
 the $L^p$ norms of the variation seminorms of  
$(\mathcal H_t f)_{t\in\mathcal I}$
are
 bounded by the $L^p$ norm of $f$.
Since the pioneering works of L\'epingle, Gaposhkin and Bourgain \cite{Lepingle, Gaposhkin1, Gaposhkin2, Bourgain, Bourgain_etal},
this kind of  bounds has proved to be  very useful
to measure and control the fluctuations of families of linear operators, and it  often provides 
an answer to the problem of pointwise convergence.
 The range of   operators for which 
variational bounds have been proved in the last thirty years is  vaste
(see {\em e.g.} \cite{Jones,  Campbell,  Jones_Rosenblatt, Jones1-higher,Jones4, Harboure, Jones2,  Crescimbeni, Betancor1, Ma1, Ma2,  Liu,
  Mirek1, Mirek2, Mirek3, Seeger, Betancor2, Mirek4, Mirek5, Mirek6, Betancor3}).
But   only a few results
     seem to concern the case of  an  Ornstein--Uhlenbeck semigroup,
despite the prototypical nature of $(\mathcal H_t)_{t\ge 0}$ as a possibly nonsymmetric semigroup in a nondoubling measure space.

The standard case of a symmetric Ornstein-Uhlenbeck semigroup 
is given by  $Q=-B=I_n$, where $I_n$ is the $n$-dimensional identity matrix.
In that particular case, a variational approach was successfully addressed  in
 \cite{Harboure, Crescimbeni}, for $\varrho>2$.
  Recent contributions in the nonsymmetric Ornstein--Uhlenbeck context  are due to  Almeida, Betancor, Farina, Quijano and Rodriguez-Mesa
\cite{AlmeidaMedit,
Almeida}; they prove some interesting variational inequalities, under the condition $\varrho>2$.

The counterexample exhibited in  \cite[Section 8]{CCS8}, which is based on  earlier work due to Qian \cite{Qian}, says that 
 for $\varrho=2$
 the  variation seminorm of  $\left(\mathcal H_t \right)_{t>0}$
 is neither of strong nor of weak type $(p,p)$ for any $p \in [1,\infty)$, with respect to $\gamma_\infty$.
In the light of this counterexample and of \eqref{domination} and \eqref{weak_domination_OU},
it is reasonable to look for a weak type $(1,1)$ jump inequality at $\varrho=2$.
Our main result is the following.
\begin{theorem}\label{thm}
For   all $f\in L^1(\gamma_\infty)$ one has
\begin{equation}\label{ineq:weaktype} J_2^{1,\infty} (\mathcal H_t f:\,t>0){\color{black}{\le C}}\,\|f\|_{L^1(\gamma_\infty)},
 \end{equation}
{\color{black}{
with $C= C(n,Q,B)$.
}}
\end{theorem}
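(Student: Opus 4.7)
The plan is to decompose $\mathcal H_t$ according to the standard local/global splitting that has proved effective in the Ornstein--Uhlenbeck setting. Choose a "local region" $L\subset\R^n\times\R^n$, selected so that on $L$ the invariant measure $\gamma_\infty$ is (locally) doubling and the Mehler kernel $K_t(x,y)$ is comparable, uniformly in a suitable range of $t$, to a Gaussian convolution-type kernel. Its complement $G=L^c$ is the "global" region. Write $\mathcal H_t = \mathcal H_t^L + \mathcal H_t^G$ accordingly, and use the quasi-subadditivity
\begin{equation*}
J_2^{1,\infty}(\mathcal H_t f : t>0) \lesssim J_2^{1,\infty}(\mathcal H_t^L f : t>0) + J_2^{1,\infty}(\mathcal H_t^G f : t>0).
\end{equation*}

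For the global piece, [CCS8, Theorem 6.1] gives a weak type $(1,1)$ bound for the variation seminorm of order $\varrho\geq 1$ associated to $\mathcal H_t^G$. Combined with the domination \eqref{weak_domination_OU}, this immediately controls $J_2^{1,\infty}(\mathcal H_t^G f : t>0)$ by $\|f\|_{L^1(\gamma_\infty)}$. For the local part, I would further split the time axis into $t > 1$ and $0 < t \leq 1$. On $t > 1$ the enhanced weak type $(1,1)$ variational estimate of order $\varrho \geq 1$ from [CCS8, Theorem 4.3] applies, and again \eqref{weak_domination_OU} converts it into a jump bound. On $0 < t \leq 1$, decompose $\mathcal H_t^L = A_t + \sum_j \Delta_t^{(j)}$, where $A_t$ is a genuine approximation-of-identity piece and the difference operators $\Delta_t^{(j)}$ admit a weak type $(1,1)$ estimate already at the level of the $v(1)$ variation; the latter therefore dominate their jump contribution at $\varrho = 2$.

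The essential obstacle is the principal piece $A_t$ for $0 < t \leq 1$, precisely because this is the regime in which the $v(2)$ variational bound is known to fail (from the counterexample of [CCS8, Section 8]). Here the reduction I envisage is a freezing argument: on $L$ one can, up to a controllable error term absorbed by the difference operators, replace the kernel of $A_t$ by a Euclidean Gaussian convolution kernel of variance proportional to $t$, acting against a density comparable to $\gamma_\infty$ on each admissible local piece. After this replacement, the problem reduces to a weak type $(1,1)$ jump inequality at $\varrho = 2$ for the classical Euclidean heat semigroup (or equivalently for heat/Gaussian averaging), which is known from the work of Jones--Seeger--Wright and Mirek--Stein--Zorin-Kranich cited in the excerpt.

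The technical work is concentrated in three places, in decreasing order of difficulty: (i) executing the freezing/comparison for $A_t$ so that the error is genuinely of $v(1)$ type with weak $(1,1)$ bound; (ii) verifying that the chosen local region $L$ is simultaneously compatible with the $\gamma_\infty$-doubling requirement and with the uniformity needed to apply the Euclidean jump inequality; and (iii) assembling the pieces via quasi-subadditivity, using only weak type distributional estimates at each stage (since the domination \eqref{weak_domination_OU} respects $L^{1,\infty}$). Once (i) is in place, steps (ii) and (iii) are standard within the machinery developed in [CCS8].
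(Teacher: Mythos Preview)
Your proposal matches the paper's strategy essentially step for step: local/global splitting, invoking the $v(\varrho)$ weak-type bounds of [CCS8, Theorems~4.3 and~6.1] for the easy pieces via \eqref{weak_domination_OU}, then writing the hard local piece for small $t$ as difference operators with $v(1)$ control plus a main convolution-type operator handled by a known Euclidean jump inequality (the paper cites Liu~\cite{Liu} rather than Jones--Seeger--Wright or Mirek--Stein--Zorin-Kranich). One refinement you will discover when carrying out~(i): the $v(1)$ estimates for the $\Delta_t^{(\kappa)}$ are only uniform in the range $t\lesssim(1+|x|)^{-2}$, so the paper inserts a further split of the local $t$-interval $(0,1]$ at $t\sim|x_j|^{-2}$ and treats the intermediate regime $[|x_j|^{-2},1]$ directly (Proposition~\ref{case t not small}), exploiting the bounded number of zeros of $\dot K_t(x,u)$ from \cite{CCS5}.
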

For $\varrho<2$ an analogous estimate 
 is not possible. Indeed, it has been proved in \cite[Lemma 2.3]{Mirek1} and in \cite[formula (2.20)]{Mirek4}, both applied with $r=2$ and $p=1$,  that
for all $1\le\varrho< 2$ one has
\begin{equation}\label{ineq:Mariusz}
\|\, \|\mathcal H_t \,f(x) \|_{v(2), \mathcal I}\|_{L^{1,\infty}(\gamma_\infty)}\lesssim_{\varrho}  J_\varrho^{1,\infty} (\mathcal H_t \,f:\,t\in\mathcal I).
\end{equation}
As a consequence,  the weak  jump quasi-seminorms $ J_\varrho^{1,\infty} (\mathcal H_t \,f :\,t\in\mathcal I)$ are unbounded for all 
$\varrho\in[1,2)$.
We remark that in \eqref{ineq:Mariusz} one cannot replace $L^{1,\infty}(\gamma_\infty)$ by $L^1(\gamma_\infty)$, as proved in 
\cite[Lemma 2.5, p. 3395]{Mirek5}; 
this lemma is stated with $\varrho=2$, but the proof holds for any $1\le\varrho \le2$.

In order better to understand   the scheme of the proof of Theorem \ref{thm} in the following sections, we need to go  a little deeper into the proof 
of \cite[Theorem 1.1]{CCS8}, concerning the weak type $(1,1)$ of the $\varrho$-th order variation seminorm of  $\left(\mathcal H_t\right)_{t>0}$.
A basic ingredient of our analysis was
 the Mehler kernel $K_t=K_t(x,u)$,  that is, the integral kernel of the semigroup operator $\mathcal H_t$ with respect to  the invariant measure; see  \eqref{mehler}    for an
  explicit expression of $K_t$.
It is well-known that $K_t$  behaves  very differently 
 in the global and in the local region,
 that is,  roughly speaking, away from or close to the diagonal $x=u$
(more  precise definitions  will be given  in Section
\ref{local1}).
It also matters whether the time parameter $t$ is large or small.
  For this reason, 
the proof of \cite[Theorem 1.1]{CCS8} requires several distinctions both in space and in time.

Some parts of the operator 
$ f \mapsto \| \mathcal H_t f\|_{v(\varrho),\Bbb R_+}$
turn out to be
of weak type $(1,1)$ not only for $\varrho>2$,  but even  
for $\varrho \ge 1$.  Because of  \eqref{weak_domination_OU}, the corresponding parts of 
 $J_\varrho^{1,\infty} (\mathcal H_t f:\,t>0)$
 will then be controlled by the $L^1(\gamma_\infty)$ norm of $f$.
 In this way, we shall see in Section \ref{s:local}
how the proof of Theorem \ref{thm} can be reduced to the case of the local region for $t\in (0,1]$.
This case will be treated in the last four sections, where we   compare our semigroup with a convolution semigroup (essentially the heat semigroup), for which jump bounds are known \cite{Liu}.
It is worth mentioning that, differently from \cite{CCS8}, we prove Theorem \ref{thm} without appealing to vector-valued Calder\'on--Zygmund theory.

\medskip

Unexpectedly, the question whether
the jump quasi-seminorm 
of a general Ornstein--Uhlenbeck semigroup  in $\R^n$
defines an operator of strong type $(p,p),\; 1<p<\infty,$ with respect to the invariant  measure, 
is still open. 
Notice however that in the symmetric case (that is, when each operator $\mathcal H_t$ is a self-adjoint operator)
 the boundedness of $J^p_2(\mathcal H_t f:t>0)$  follows
from
\cite[Theorem 1.2]{Mirek1}.


\medskip
 The structure of the article is as follows.  Section~\ref{subadd} gives a subadditivity property for the counting function. Then in  Sections~\ref{s:Gaussian} and \ref{s:Mehler}
we state  basic facts concerning
 the Ornstein--Uhlenbeck semigroup and the Mehler kernel, respectively.
  The distinction between the local and the global part of the Ornstein--Uhlenbeck semigroup operator
 is presented in  Section~\ref{s:local}, and some cases of  Theorem~\ref{thm} are verified (see above).
 The remaining case of Theorem \ref{thm} is proved  in 
Sections~6--9. This is the  local part for $t\in (0,1]$, and   
the localization allows us to deal with one patch of $\R^n$ at a time.
 More precisely, in Section~\ref{s:tsmall_easy} 
  we treat the case  when $t\le 1$ is not too small (essentially  $ |x|^{-2} \le t \le 1$),   proving the strong type $(p,p)$ for  $p>1$ and the weak type $(1,1)$
                   for the variation of the local part of $\mathcal H_t  f$, and here  $\varrho \ge 1$.
Dealing with  the local part of $\mathcal H_t f$ for small $t$  (essentially $0<t\le \min(1, {|x|^{-2}}) $) is more involved.
In Section~\ref{s:dec} we decompose 
the semigroup operator into  a sum of three difference operators 
and a main operator.                         
By means of some technical estimates
one proves                  
 in Section~\ref{subsection:diff}
 strong and weak type estimates for the variation of
  the difference operators,  with $\varrho\ge 1$.           
In Section~\ref{s:The main operator} we conclude the proof of  Theorem~\ref{thm} by showing that the main operator 
satisfies both strong and weak jump inequalities for $\varrho=2$.

\smallskip

It is a pleasure to thank Professor Mariusz Mirek for many helpful and constructive discussions
on the subject of this paper.

\subsection*{Notation}
We will write $C < \infty$ and $c > 0$ for various constants, all of which depend only
on $n$, $Q$ and $B$, unless otherwise explicitly stated.
{\color{black}{The constants involved when we write $\lesssim$ or $\gtrsim$ or $\simeq$ are of this type, except when there is a subscript indicating dependence also on an additional parameter.}}
 The Lebesgue measure of a measurable set $E \subset \R^n$ will be denoted by $|E|$. 
 We write $B(x_0,r)$ for the closed ball of center $x_0$ and radius $r$, and
 $CB(x_0,r)$ for its concentric scaling. By $\mathbb{N}$ we mean $\{0, 1,\dots\}$.

\bigskip

\section{Subadditivity}\label{subadd}

In this section, we describe a subadditivity property of the counting function, in a generic setting.
  For a more comprehensive treatment of the variational and jump inequalities, we refer the reader to \cite{Mirek4, 
Mirek5} or  the recent monograph \cite{Krause}.
\smallskip

Let $\varphi=\varphi(t) $ be a continuous
 real- or complex-valued function   defined in an interval $\mathcal I {\color{black}{\subseteq}} \R_+$.
The $\lambda$-jump counting function of $\varphi$ in $\mathcal I$ may be defined as in \eqref{def:jump}, with $\varphi (t)$ replacing $\mathcal H_t f(x)$.  
Then 
$N_\lambda \big( \varphi(t):\,t\in \mathcal I\big)$ is subadditive in $\mathcal I$.
Moreover, 
it satisfies a kind of  quasi-subadditivity property with respect to $\varphi $;
 more specifically, one has 
\begin{equation}\label{pr:subadd}
 N_\lambda \big(\varphi(t)+\psi(t):\,t\in \mathcal I\big)\le 2\Big(
  N_{\lambda/4} \big(\varphi(t):\,t\in \mathcal I\big)+ N_{\lambda/4} \big(\psi(t):\,t\in \mathcal I\big)\Big)    
  \end{equation}
 for all $\varphi, \psi$ defined on $\mathcal I$ and all $\lambda > 0$.
 For a proof  we refer to  \cite[formulas (2) and (3),  p. 6712]{Jones2}, where a slightly different $\lambda$-jump counting function is also involved.
 This justifies calling $J_\varrho^p(\cdot)$  and $J_\varrho^{1,\infty}(\cdot)$  quasi-seminorms.

\bigskip

\section{A general Gaussian setting}\label{s:Gaussian}

We introduce some basic notions in the setting of  Ornstein--Uhlenbeck analysis  and recall some facts from \cite{CCS2, CCS3,  CCS4}.
\begin{enumerate}
\item As stated in the introduction,
 $Q$ is  a real, symmetric and positive definite $n\times n$ matrix, and
$B$ a real $n\times n$   matrix whose eigenvalues have negative real parts.
\item The covariance  matrices  are defined by
\begin{equation}\label{defQt}
Q_t=\int_0^t e^{sB}Qe^{sB^*}ds, \qquad \text{ $t\in (0,+\infty]$}.
\end{equation}
Both $Q_t$ and $Q_\infty$ are well defined, symmetric  and positive definite.
Since $Q_t = t\,Q(1+\mathcal O(t))$ as $t \to 0$, one also has
\begin{equation}\label{qt1}
  Q_t^{-1} = t^{-1}\,Q^{-1}(1+\mathcal O(t))
   \end{equation}
  and
\begin{equation}\label{qt2}
\det \, Q_t =t^n\, \det  Q\, (1+ \mathcal O(t))
  \end{equation}
  as $t \to 0$. Further,
\begin{equation}\label{qt3}
 |Q_t^{-1}| \lesssim t^{-1} \quad \mathrm{for} \quad  0<t<1.
  \end{equation}

\item 
$R(x)$ denotes 
the quadratic form 
\begin{equation*}
R(x) ={\frac12 \left\langle Q_\infty^{-1}x ,x  \right\rangle}, \qquad\text{$x\in\R^n$.}
\end{equation*}
\item
$D_t$ is a one-parameter semigroup  of matrices, defined by
\begin{equation}\label{def:Dtx}
D_t =
 Q_\infty
 e^{-tB^*} Q_\infty^{-1} , \qquad t\in\R
.
\end{equation}
For $|t|\le 1$ and  $0\neq x \in \R^n$ these matrices satisfy
 \begin{equation}\label{dtx}
  |x-D_t x|\simeq |t|\,|x|,                            
  \end{equation}
\item  $\gamma_t$  denotes the
normalized Gaussian measure in $\R^n$, given by
$$
d\gamma_t (x)=
(2\pi)^{-\frac{n}{2}}
(\text{det} \, Q_t)^{-\frac{1}{2} }
e^{-\frac12 \langle Q_t^{-1}x,x\rangle}
dx  \,,\qquad \text{ $t\in (0,+\infty]$. }$$
In particular, 
\begin{equation}\label{def:inv_meas}
d\gamma_\infty (x)=
(2\pi)^{-\frac{n}{2}}
(\text{det} \, Q_\infty)^{-\frac{1}{2} }
e^{-R(x)}
dx,  
\end{equation}
and this is the unique invariant measure   for the Ornstein--Uhlenbeck semigroup.
\end{enumerate}

\bigskip

 \section{The Mehler kernel and its derivative}\label{s:Mehler}
The Ornstein--Uhlenbeck semigroup operator $\mathcal  H_t $ is given, for all $f\in L^1(\gamma_\infty)$ and all $t>0$,
 by
\begin{align}                       \label{def-int-ker}
 \mathcal H_t
f(x) &=
 \int
K_t
(x,u)\,
f(u)\,
 d\gamma_\infty(u)
  \,, \qquad x\in\R^n.
\end{align}
The integral kernel $K_t$  is called the Mehler kernel and is given, for all
 $x,u\in\R^n$ and $t>0$, by
\begin{align}    \label{mehler}                   
K_t (x,u)\!
=\!
\Big(
\frac{\det \, Q_\infty}{\det \, Q_t}
\Big)^{{1}/{2} }
e^{R(x)}
\exp \Big[
{-\frac12
\left\langle \left(
Q_t^{-1}-Q_\infty^{-1}\right) (u-D_t \,x) , u-D_t \,x\right\rangle}\Big],\qquad
\end{align}
 see  \cite[formula (2.6)]{CCS2}.

The following
estimate, holding for $0<t\leq 1$ and all $x,u\in  \R^n$, is proved in \cite[(3.4)]{CCS2}:
\begin{equation}\label{litet}
   \frac{ e^{R( x)}}{t^{n/2}}\exp\left[-C\,\frac{|u-D_t \,x |^2}t\right]
 \,\lesssim\,   K_t(x,u)
\,\lesssim \, \frac{ e^{R( x)}}{t^{n/2}} \exp\left[-c\,\frac{|u-D_t\, x |^2}t\right].
\end{equation}


We shall also need  the derivative of $K_t(x,u)$
with respect to $t$, which we denote by $\dot K_t(x,u)$.
For all $(x,u) \in \mathbb R^n\times \mathbb R^n$ and
$t>0$, we proved in \cite[Lemma 4.2]{CCS5} that
\begin{align}\label{def:Kt_deriv}
\dot K_t(x,u) = K_t(x,u) \, N_t^{(0)}(x,u),
\end{align}
where the function $N_t^{(0)}$ is given by
\begin{align}\label{R}
\notag
N_t^{(0)} (x,u)&=-\frac12\,{\tr
\big(Q_t^{-1} \, e^{tB}\, Q\, e^{tB^*}\big)}
+\frac12\,
\left| Q^{1/2}\, e^{tB^*}\, Q_t^{-1}\,(u-D_t\, x)
\right|^2
\notag\\
&\qquad\qquad\qquad
-
\left\langle Q_\infty \,B^*\, Q_\infty^{-1}\, D_t\, x\,,\,
(Q_t^{-1}-Q_\infty^{-1})\,(u-D_t\, x)\right\rangle.
\end{align}

\medskip

For future convenience, we describe  in the Ornstein--Uhlenbeck framework
 a simple way of estimating the variation seminorm, which is sometimes applicable.  In the case when $\varrho = 1$, the variation of $\mathcal H_tf(x)$ is a supremum of sums $\sum |\mathcal  H_{t_i}f(x) - \mathcal H_{t_{i-1}}f(x)|$, which can be estimated by the sum 
$\sum \int_{t_{i-1}}^{t_i} |\partial_t\mathcal H_tf(x)|\,dt$. Since the variation seminorm is decreasing in $\varrho$, we  get for each $\varrho \in [1,\infty)$ and any interval $I \subset \R_+$, at least formally, 
 \begin{align}  \label{important}
 \|\mathcal H_tf(x)\|_{v(\varrho), I}&\le     \|\mathcal H_tf(x)\|_{v(1), I}\le      
    \int_I \left|\frac{\partial}{\partial t} \int K_t(x,u) f(u)\,d\gamma_\infty (u)  \right|\,dt  \\\notag
&\le \int         \int_I \big|  \dot K_t(x,u)\big| \,dt \, |f(u)|\,  d\gamma_\infty (u),\qquad f\in L^1(\gamma_\infty).
  \end{align}
  This argument was justified and used in  \cite[Subsection 5.1]{CCS5}.    
  Below we will apply it to some other operators and their kernels.

\bigskip

\section{Localization}\label{s:local}
\label{local1}
The distinction between a local and a global part of the operator is a standard tool in the study of the Ornstein--Uhlenbeck semigroup; we refer to \cite{Mu, Sj, Perez}
for more details on this idea. Here we just describe in a  concise way  the localization procedure introduced in 
 \cite[Section 7]{CCS5}.
 
 We choose a sequence $B_j = B(x_j, 1/(1+|x_j|)),\;j=0,1,\dots,$ of closed balls whose interiors are pairwise disjoint, and they form a maximal family with this property. As shown in \cite[Section 7]{CCS5}, the scaled balls $3B_j = B(x_j, 3/(1+|x_j|))$ then cover  $\mathbb R^n$. The sequence starts with $B_0 = B(0, 1)$, which implies that $|x_j|>1$ for $j \ge 1$.
 
 In  \cite[Section 7]{CCS5} we also introduced smooth, nonnegative funtions $r_j,\;j=0,1,\dots,$ such that 
  $\sum_0^\infty r_j = 1$ in  $\mathbb R^n$, and with  $r_j$ supported in $4B_j$.  The functions 
  $\widetilde r_j,\;j=0,1,\dots,$ also have values in $[0,1]$, and  $\widetilde r_j$ is supported in  $6B_j$ and equals 1 in $5B_j$.
 For the gradients, one has
 \begin{equation}\label{nabla2}   |\nabla  r_j(x)| ,\;\;
\big|\nabla \widetilde r_j (x)\big|\lesssim 1+|x|.
\end{equation}
  The balls  $6B_j$ have bounded overlap, see  \cite[Section 7]{CCS5}. Moreover, the density of $\gamma_\infty$ is essentially constant in each $6B_j$, since 
  \begin{equation}\label{density} 
  e^{R(x)} \simeq e^{R(x_j)}, \qquad x \in  6B_j,
\end{equation}
uniformly in $j$.
  
  \medskip 
 
 Then we let
\begin{equation}\label{def:eta}
 \eta(x,u)=\sum_{j=0}^\infty
\widetilde r_j (x) \,r_j (u), \qquad x,\; u \in \R^n,
\end{equation}
and observe that $0 \le \eta(x,u)\le 1.$
The local and global parts of  $\mathcal H_t$ are defined as
 \begin{align}\label{kernelHloc}
    \mathcal H_t^{\mathrm{loc}}  f(x) =   \int_{\R^n} K_t(x,u)\, \eta (x,u)\, f(u)\,  d\gamma_\infty (u), 
   \qquad x \in \R^n,  
    \end{align}
    and
\begin{equation*}                        
     \mathcal H_t^{\mathrm{glob}}  f(x) =  \mathcal  H_t  f(x) -    \mathcal H_t^{\mathrm{loc}} f(x) =   \int_{\R^n} K_t(x,u)\, (1- \eta (x,u))\, f(u)\,
d\gamma_\infty(u), \qquad x \in \R^n.
\end{equation*}

We observe that
 \begin{align}\label{Hloc}
    \mathcal H_t^{\mathrm{loc}}  f(x) =  \sum_{0}^{\infty} \widetilde r_j (x) \, \mathcal H_t (f\,r_j), 
   \qquad x \in \R^n.  
    \end{align}
This sum is locally finite, since the $\widetilde r_j $ have bounded overlap.

 \medskip

Some parts of our jump operator are easy to treat, by means of the following proposition.

 \begin{proposition}
 
 Let  $1\le \varrho <\infty$.
For  $f \in L^1(\gamma_\infty)$
  \begin{equation}\label{op:oper3}
J^{1,\infty}_{\varrho} \left(\mathcal H_t f:t\in [1,\infty)\right)  \lesssim_{\color{black}\varrho} \|f \|_{L^1(\gamma_\infty)}
\end{equation}
and
   \begin{equation}\label{op:oper4}
J^{1,\infty}_{\varrho} \left(\mathcal H_t^{\mathrm{glob}} f:t\in (0,1]\right)   \lesssim_{\color{black}\varrho} \|f \|_{L^1(\gamma_\infty)}.
\end{equation}
\end{proposition}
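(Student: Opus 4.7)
The plan is to invoke the domination \eqref{weak_domination_OU}, which reduces both \eqref{op:oper3} and \eqref{op:oper4} to weak type $(1,1)$ bounds for the corresponding variation operators. Since the variation seminorm is decreasing in $\varrho$, it suffices to treat $\varrho = 1$. For $\varrho = 1$, we use the elementary bound \eqref{important}: the variation of $\mathcal H_t f(x)$ over an interval $\mathcal I$ is dominated pointwise by the integral operator with kernel $\int_{\mathcal I}|\dot K_t(x,u)|\,dt$ acting on $|f|$, with the obvious modification involving the extra factor $1-\eta(x,u)$ in the global case.

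For \eqref{op:oper3}, take $\mathcal I = [1,\infty)$. Because all eigenvalues of $B$ have negative real parts, the matrices $e^{tB}$ and $D_t$ decay exponentially and $Q_t \to Q_\infty$ as $t \to \infty$. Inserting these asymptotics into \eqref{def:Kt_deriv}--\eqref{R}, and using that $K_t(x,u)$ is comparable to the constant kernel $1$ for large $t$ (since $\gamma_\infty$ is invariant), one obtains a pointwise bound
$$\int_1^\infty |\dot K_t(x,u)|\,dt \le G(x,u),$$
where $G(x,u)$ has uniformly bounded rows and columns in $L^1(\gamma_\infty)$. By Schur's test the majorizing operator is then bounded on $L^1(\gamma_\infty)$, which is stronger than the required weak type $(1,1)$.

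For \eqref{op:oper4}, take $\mathcal I = (0,1]$ and the global kernel $K_t(x,u)(1-\eta(x,u))$. On the support of $1 - \eta$, the construction of the partition $\{r_j, \widetilde r_j\}$ together with \eqref{dtx} yields $|u - D_t x| \gtrsim 1/(1+|x|)$ for all $t \in (0,1]$, so the Gaussian factor in \eqref{litet} dominates the $t^{-n/2}$ singularity as $t \to 0$; a parallel estimate for $\dot K_t$ follows from \eqref{R}. After integration in $t$, one recovers precisely the weak type $(1,1)$ bound for the variation of the global part of $\left(\mathcal H_t\right)_{t \in (0,1]}$ proved in \cite{CCS8}, and \eqref{weak_domination_OU} gives \eqref{op:oper4}. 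The main obstacle is that $\gamma_\infty$ is nondoubling in the global region, so one cannot appeal to standard Calder\'on--Zygmund theory; however, the off-diagonal separation enforced by $1-\eta$ provides enough Gaussian decay that the direct kernel argument of \cite{CCS8} applies without modification.
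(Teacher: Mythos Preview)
Your overall structure is the paper's: reduce both inequalities to weak type $(1,1)$ for the corresponding variation operators via \eqref{weak_domination_OU}. The paper then simply quotes \cite[Theorems 4.3 and 6.1]{CCS8} for those two weak type bounds, and for \eqref{op:oper4} you ultimately do the same. The problem is with your direct argument for \eqref{op:oper3}.

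First, a factual error: from \eqref{def:Dtx} one has $D_t = Q_\infty e^{-tB^*} Q_\infty^{-1}$, and since the eigenvalues of $B^*$ have negative real parts, $e^{-tB^*}$ \emph{grows} exponentially as $t\to\infty$; so $D_t$ does not decay. What decays is $D_{-t}$, and indeed the large-time estimates for $N_t^{(0)}$ in \cite{CCS5} are phrased in terms of $D_{-t}u - x$. More seriously, once you carry out those asymptotics honestly you find
\[
|N_t^{(0)}(x,u)|\lesssim e^{-ct}\big(|x|^2+|u|^2+1\big),\qquad t\ge 1,
\]
so that the majorizing kernel $G(x,u)=\int_1^\infty |\dot K_t(x,u)|\,dt$ carries a factor $|x|^2+|u|^2$. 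The column integrals $\int G(x,u)\,d\gamma_\infty(x)$ are then \emph{not} uniformly bounded (the $|x|^2$ term survives), and Schur's test fails. This is exactly why the $L^p$ theory for $t\ge1$ needs hypercontractivity (which is unavailable at $p=1$) and why \cite[Theorem~4.3]{CCS8} only gives weak type $(1,1)$, via a genuinely different argument. Your claim of strong type $(1,1)$ is therefore unsupported and almost certainly false; you must cite \cite[Theorem~4.3]{CCS8} here, as the paper does.

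A smaller point on \eqref{op:oper4}: the assertion $|u-D_t x|\gtrsim 1/(1+|x|)$ on the support of $1-\eta$ is not uniform in $t\in(0,1]$, since $|x-D_t x|\simeq t|x|$ can be large when $t$ is near $1$ and $|x|$ is large. Since you end by invoking \cite{CCS8} anyway, this does not damage the conclusion, but the sketch preceding it should be dropped.
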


\begin{proof}  
From   \cite[Theorems 4.3 and 6.1]{CCS8} we know that for   $1\le \varrho <\infty$ the operators mapping  $f \in L^1(\gamma_\infty)$ to the functions
  \begin{equation}\label{op:oper1}
  \| \mathcal H_t f(x)\|_{v(\varrho), [1,+\infty)}, \quad x \in \mathbb R^n,
\end{equation}
and
   \begin{equation}\label{op:oper2}
  \|{  \mathcal H}_t^{\mathrm{glob}} f(x)\|_{v(\varrho), (0,1]}, \quad x \in \mathbb R^n,
\end{equation}
are  of weak type $(1,1)$ with respect to the measure $\gamma_\infty$. (Notice that our definition of  ${  \mathcal H}_t^{\mathrm{glob}}$  is that of \cite{CCS5} so the maximal operator estimate used in the proof of \cite[Theorem 6.1]{CCS8} is that of \cite[Theorem 10.1]{CCS5}, not \cite[Theorem 6.2]{CCS8}.)
 From these two estimates and  \eqref{weak_domination_OU}, the proposition follows.
\end{proof}

As a consequence, 
in order to prove Theorem \ref{thm} it suffices to consider  the local part of the semigroup with $t\in (0,1]$.
Doing so, we deal with each  term in the sum in \eqref{Hloc}   separately, in the following way.

\begin{proposition}   \label{essential}
  Let $j \in \mathbb{N}$. For any  $f \in L^1(\gamma_\infty)$ 
  \begin{equation*}
  J_2^{1,\infty} \left(\widetilde r_j\, \mathcal H_t (f\,r_j):\,0<t\le 1\right)\lesssim \|f\|_{L^1(\gamma_\infty)}
  \end{equation*}
  uniformly in $j$.
 \end{proposition}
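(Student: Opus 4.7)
The plan is to localize to $6B_j$ (the support of $\widetilde r_j$) and to split the time interval $(0,1]$ at the natural scale $\tau_j = \min(1,|x_j|^{-2})$. By the subadditivity \eqref{pr:subadd} of the jump counting function in the interval, together with the quasi-triangle inequality in $L^{1,\infty}(\gamma_\infty)$, it is enough to estimate $J_2^{1,\infty}(\widetilde r_j\, \mathcal H_t(f r_j):t\in \mathcal I)$ separately for $\mathcal I = \mathcal I_1 := [\tau_j,1]$ and $\mathcal I = \mathcal I_2 := (0,\tau_j]$, uniformly in $j$. The case $j=0$ is straightforward: the patch is $B(0,1)$ and the Mehler kernel and its time derivative are uniformly bounded there, so a direct estimate closes the argument. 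For $j\ge 1$ one has $|x_j|\ge 1$ and hence $\tau_j = |x_j|^{-2}$; by \eqref{density} the density of $\gamma_\infty$ is essentially constant on $6B_j$, so up to a multiplicative factor $\gamma_\infty$ coincides there with Lebesgue measure.

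On $\mathcal I_1$ I would use that $J_\varrho^p$ is decreasing in $\varrho$ together with \eqref{weak_domination_OU} to dominate $J_2^{1,\infty}$ by the weak $L^1$ norm of the variation at $\varrho=1$. By \eqref{important} this in turn is bounded by the linear operator with kernel $\int_{\mathcal I_1}|\dot K_t(x,u)|\,dt$. The representations \eqref{def:Kt_deriv} and \eqref{R}, the kernel estimate \eqref{litet} and the matrix bounds \eqref{qt1}--\eqref{qt3} give a pointwise control of $|\dot K_t|$ by a Mehler-type expression times $t^{-1}$ plus polynomial corrections that are harmless in the range $t\ge |x_j|^{-2}$ and $x,u\in 6B_j$. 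Integrating in $t$ and using the local structure of $\gamma_\infty$ on $6B_j$, one obtains a weak-type $(1,1)$ bound (in fact strong-type for $p>1$) for the variation at $\varrho = 1$, and hence for $J_2^{1,\infty}$.

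On $\mathcal I_2$ the variation at $\varrho\le 2$ is known to fail, so the Mehler kernel must be dissected more carefully. Following the decomposition to be carried out in Section~\ref{s:dec}, one writes $K_t(x,u)\eta(x,u)$ as the sum of a \emph{main} kernel $K^{\mathrm{main}}_t$ together with three \emph{difference} kernels. The main kernel is designed so that on $6B_j$, after accounting for the density $e^{-R(u)}$ of $\gamma_\infty$, it is comparable up to a constant to a Euclidean heat-type convolution $t^{-n/2}\exp(-c|x-u|^2/t)$; each difference kernel captures one specific correction to this approximation (replacing $D_tx$ by $x$ in the quadratic exponent, replacing $Q_t^{-1}-Q_\infty^{-1}$ by $Q^{-1}/t$, and replacing the slowly varying prefactors of $K_t$ by their values at $x_j$). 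Each difference gains a factor that is small on $\mathcal I_2\times 6B_j\times 6B_j$ (a power of $t|x_j|^2\le 1$ or of $|x-u|\,|x_j|\lesssim 1$), so the corresponding variations at $\varrho=1$, estimated via \eqref{important} and pointwise control of the time derivative, are dominated by weak-type $(1,1)$ convolution operators; through \eqref{weak_domination_OU} this handles the three differences.

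The main obstacle, and the content of Section~\ref{s:The main operator}, is the main operator: at $\varrho=2$ its variation is genuinely unbounded, so we must work with jumps directly. The strategy is to exploit the fact that after the reductions above the main operator acts on $6B_j$ as a constant multiple of a Euclidean convolution with a Gaussian heat kernel associated with the constant-coefficient operator $\tfrac12\tr(Q\nabla^2)$, the measure $\gamma_\infty$ being equivalent to Lebesgue measure on this patch by \eqref{density}. For such Gaussian convolution semigroups, weak-type $(1,1)$ jump inequalities at $\varrho=2$ with respect to Lebesgue measure are known (see \cite{Liu}). Transferring this Euclidean estimate back to $\gamma_\infty$ via \eqref{density} gives the required bound, with constants independent of $j$, and completes the proof of Proposition~\ref{essential}.
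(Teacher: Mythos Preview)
Your outline matches the paper's architecture: split $(0,1]$ at $\tau_j=\min(1,|x_j|^{-2})$, control $[\tau_j,1]$ through the $v(1)$-variation and \eqref{important}, and on $(0,\tau_j]$ decompose $K_t$ into a main convolution kernel plus three successive differences, the differences handled variationally (Section~\ref{subsection:diff}) and the main part via the Euclidean jump inequality of \cite{Liu} (Section~\ref{s:The main operator}). Two points, however, need correction.

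First, your handling of $j=0$ is wrong: on $6B_0=B(0,6)$ the Mehler kernel behaves like $t^{-n/2}$ as $t\to0$ and is certainly not uniformly bounded, nor is $\dot K_t$. The paper simply notes that $x_0=0$ forces $\tau_0=1$, so $\mathcal I_1$ is empty and the whole interval $(0,1]$ is covered by the $\mathcal I_2$ analysis (decomposition plus \cite{Liu}); no separate argument is needed.

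Second, on $\mathcal I_1$ the paper does \emph{not} integrate a pointwise bound on $|\dot K_t|$. It invokes \cite[Proposition~9.1]{CCS5}, which says $t\mapsto\dot K_t(x,u)$ has at most $\bar N=\bar N(n,B)$ zeros in $(0,1]$; then the fundamental theorem of calculus gives
\[
\int_{|x_j|^{-2}}^{1}|\dot K_t(x,u)|\,dt\le 2\bar N\sup_{|x_j|^{-2}\le t\le 1}K_t(x,u)\lesssim e^{R(x_j)}|x_j|^{n},
\]
which is exactly $|4B_j|^{-1}$ after passing to Lebesgue measure. Your direct route can be pushed through, but as stated it does not close uniformly in $j$: the cross term in $N_t^{(0)}$ contributes a factor $|x|\,t^{-(n+1)/2}$ after absorbing $|u-D_tx|/\sqrt t$ into the Gaussian, and in dimension $n=1$ one has $\int_{|x_j|^{-2}}^{1}|x_j|\,t^{-1}\,dt\simeq|x_j|\log|x_j|$, an extra logarithm. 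To salvage the direct integration you would need to split off a short range near $t=|x_j|^{-2}$ and, for larger $t$, use $|u-D_tx|\gtrsim t|x_j|$ (a consequence of \eqref{dtx} and $|u-x|\lesssim|x_j|^{-1}$) to get the extra decay $\exp(-c\,t|x_j|^{2})$. The zero-counting trick bypasses all of this.
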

 Notice that  
  \eqref{density} allows us to replace  $\gamma_\infty$ here by Lebesgue measure.

 Because of the bounded overlap of the  $\widetilde r_j$, 
this proposition implies that
\begin{equation*}
  J_2^{1,\infty} \left( \mathcal H_t^{\mathrm{loc}} f:\,0<t\le 1\right)\lesssim \|f\|_{L^1(\gamma_\infty)}.
   \end{equation*}
   This and Proposition \ref{op:oper4} will complete the proof of
 Theorem \ref{thm}, in view of the quasi-subadditivity in Section \ref{subadd}.

To prove Proposition \ref{essential}, we will split the $t$ interval  $(0,1]$ for $j \ge 1$ into 
$[ |x_j|^{-2}, 1]$ (treated in Section \ref{s:tsmall_easy}) and 
 $ (0,   \min(1, |x_j|^{-2})$
(in Sections \ref{s:dec}, \ref{subsection:diff} and \ref{s:The main operator}).   
For $j=0$ only the second of these subintervals will be nonempty, since $x_0 =0$.

\bigskip

\section{The local case for  $t$ not too small}\label{s:tsmall_easy}

\begin{proposition} \label{case t not small}
 Let  $1\le \varrho <\infty$. 
   For all $j \ge 1$  the operator that maps   $f$ to the function
  \begin{equation*}
 \|\widetilde r_j\, \mathcal H_t  (f\,r_j)\|_{v(\varrho), [ |x_j|^{-2} , 1]}
\end{equation*}
is of strong type $(p,p)$ for all $p \ge 1$ with respect to  Lebesgue measure,
 uniformly in $j$.
\end{proposition}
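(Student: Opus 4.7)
The plan is to reduce to the case $\varrho=1$, using that $\|\cdot\|_{v(\varrho)}$ is decreasing in $\varrho$, and then to bound the $v(1)$ seminorm pointwise by the time integral of $|\partial_t \mathcal H_t|$ via \eqref{important}. This gives
$$\|\widetilde r_j\,\mathcal H_t(fr_j)\|_{v(1),[|x_j|^{-2},1]}(x)\le \widetilde r_j(x)\int_{4B_j}\Big(\int_{|x_j|^{-2}}^1|\dot K_t(x,u)|\,dt\Big)|f(u)|\,d\gamma_\infty(u),$$
so it suffices to show that the integral operator on $L^p(dx)$ whose Lebesgue kernel (up to harmless constants) is
$$G_j(x,u):=\widetilde r_j(x)\,r_j(u)\,e^{-R(u)}\int_{|x_j|^{-2}}^1|\dot K_t(x,u)|\,dt$$
has norm $\lesssim 1$ uniformly in $j$, for every $p\in[1,\infty]$. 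The key claim is the pointwise bound $G_j(x,u)\lesssim |x_j|^n$ on $6B_j\times 4B_j$, which, together with $|4B_j|\simeq|6B_j|\simeq |x_j|^{-n}$, makes both Schur conditions $\sup_x\int G_j(x,u)\,du$ and $\sup_u\int G_j(x,u)\,dx$ bounded by $O(1)$ uniformly in $j$.

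To prove the pointwise claim, I combine \eqref{def:Kt_deriv} and \eqref{R} with $\|Q_t^{-1}\|,\|Q_t^{-1}-Q_\infty^{-1}\|\lesssim 1/t$ for $0<t\le 1$ to obtain
$$|N_t^{(0)}(x,u)|\lesssim \frac{1}{t}+\frac{|u-D_tx|^2}{t^2}+\frac{|x_j|\,|u-D_tx|}{t}.$$
Since $|u-D_tx|\le |u-x|+|x-D_tx|\lesssim |x_j|^{-1}+t|x_j|\lesssim t|x_j|$ for $t\ge |x_j|^{-2}$ and $u\in 4B_j$, each of the three terms is controlled by $|x_j|^2$. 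Combined with the Gaussian bound \eqref{litet} and the cancellation $e^{R(x)-R(u)}\simeq 1$ from \eqref{density}, this yields
$$e^{-R(u)}|\dot K_t(x,u)|\lesssim \frac{|x_j|^2}{t^{n/2}}\,e^{-c|u-D_tx|^2/t}.$$

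It remains to integrate in $t$. I split $[|x_j|^{-2},1]=[|x_j|^{-2},C'/|x_j|^2]\cup[C'/|x_j|^2,1]$, with $C'$ large enough that on the second subinterval $|D_tx-x_j|\simeq t|x_j|$ (by \eqref{dtx}) dominates $|u-x_j|\lesssim |x_j|^{-1}$, giving $|u-D_tx|\gtrsim t|x_j|$ uniformly in $u\in 4B_j$. The Gaussian factor then becomes $e^{-ct|x_j|^2}$, and the substitution $s=t|x_j|^2$ produces
$$\int_{C'/|x_j|^2}^1 \frac{|x_j|^2}{t^{n/2}}\,e^{-ct|x_j|^2}\,dt\lesssim |x_j|^n\int_{cC'}^{\infty} s^{-n/2}e^{-s}\,ds\lesssim |x_j|^n.$$
On the first subinterval, of length $\lesssim |x_j|^{-2}$, no Gaussian decay is exploited, but the integrand $|x_j|^2 t^{-n/2}$ is at most $|x_j|^{n+2}$, giving a contribution also $\lesssim |x_j|^n$. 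This proves $G_j(x,u)\lesssim |x_j|^n$, and hence the proposition. The main obstacle is the precise interplay between the three terms of $|N_t^{(0)}|$ and the Gaussian decay in $|u-D_tx|/\sqrt t$, particularly on the short subinterval $[|x_j|^{-2},C'/|x_j|^2]$ where the exponential factor provides no decay and one has to rely entirely on the smallness of the interval's length.
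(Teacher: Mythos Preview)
Your argument is correct and arrives at the same pointwise kernel bound $G_j(x,u)\lesssim |x_j|^n$ as the paper, but by a genuinely different route. The paper does not estimate $|N_t^{(0)}|$ directly at all; instead it invokes the structural fact that $t\mapsto \dot K_t(x,u)$ has at most $\bar N=\bar N(n,B)$ zeros in $(0,1]$ (Proposition~9.1 of \cite{CCS5}), so that by the fundamental theorem of calculus
\[
\int_{|x_j|^{-2}}^{1}|\dot K_t(x,u)|\,dt \le 2\bar N\sup_{|x_j|^{-2}\le t\le 1}K_t(x,u)\lesssim \bar N\,e^{R(x_j)}\,|x_j|^n,
\]
the last step being just $K_t\lesssim e^{R(x)}t^{-n/2}\le e^{R(x_j)}|x_j|^n$. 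Your approach replaces this zero-counting input by the elementary bound $|N_t^{(0)}(x,u)|\lesssim |x_j|^2$ on the relevant range, followed by a direct $t$-integration that exploits the Gaussian decay $e^{-ct|x_j|^2}$ coming from $|u-D_tx|\gtrsim t|x_j|$ on $[C'/|x_j|^2,1]$. Your route is more self-contained (it does not need the cited result on zeros of $\dot K_t$), while the paper's is shorter once that result is available.

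One small point to tidy up: the lower bound $|u-D_tx|\gtrsim t|x_j|$ relies on $|x|\simeq |x_j|$ for $x\in 6B_j$, which only holds once $|x_j|$ exceeds a fixed constant (for $|x_j|$ close to $1$ the ball $6B_j$ can contain the origin, and $|x-D_tx|\simeq t|x|$ may then be arbitrarily small). This is harmless, since for $1<|x_j|\le C_1$ the interval $[|x_j|^{-2},1]$ has bounded length and $|x_j|^2t^{-n/2}$ is bounded on it, giving $G_j\lesssim 1\simeq |x_j|^n$ directly; but a sentence to that effect would make the split airtight.
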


Because of \eqref{weak_domination_OU},  this implies that part of Proposition \ref{essential} which deals with 
$t \in [|x_j|^{-2} , 1]$.

\begin{proof}
We fix  $j \ge 1$.
From \eqref{litet}                      
it follows that for  $x \in 6B_j$, any $u \in \R^n$ and  $|x_j|^{-2} \le t \le 1$ 
\begin{align}\label{dotKeps}
K_t(x,u)
&\lesssim
 e^{R(x)}\,t^{-n/2}\, \lesssim \, e^{R(x_j)}\,|x_j|^{n}.
 \end{align}

  Let $N(x,u)$ be the number
of zeros in $ [|x_j|^{-2} , 1]$ of the function $t\mapsto \dot K_t(x,u)$.
 Proposition 9.1 in \cite{CCS5} implies that $N(x,u) \le  \bar{N}$  for some  constant $\bar N $
that   depends  only on $n$ and $B$.
We denote these zeros by $t_1(x,u)< \dots< t_{N(x,u)}(x,u)$,
and set $t_0(x,u) =|x_j|^{-2}$ and
$t_{N(x,u)+1}(x,u) =1$.
It follows from the fundamental theorem of calculus that
\begin{multline} \label{int_dotK}
\int_{|x_j|^{-2}}^1 \left|\dot K_t(x,u)\right| dt
= \sum_{i=0}^{N(x,u)} \left|K_{t_{i+1}(x,u)}(x,u) - K_{t_i(x,u)}(x,u)\right|  \lesssim \bar N \sup_{|x_j|^{-2}\le t\leq 1}K_{t}(x,u).
\end{multline}

   We can safely apply  \eqref{important} to the operator  $f \mapsto \widetilde r_j\,\mathcal H_t \, (f\,r_j)$.
   This was verified for  $\mathcal H_t$ in  \cite[Subsection 5.1]{CCS5}, and the extra factors $\widetilde r_j$ and 
   $ r_j$ do not matter here. Combined with \eqref{int_dotK} and \eqref{dotKeps}, this yields for 
 any $\varrho \ge 1$ and 
 $f \in L^1(\gamma_\infty)$
\begin{align}  \label{important1}
 \|\widetilde r_j(x)\,\mathcal H_t \, (f\,r_j)(x)\|_{v(\varrho), [|x_j|^{-2},1]}&\le    
 \widetilde r_j(x)\, \int \, \int_{|x_j|^{-2}}^1 \big|\dot K(x,u)\big|\,dt\, |f(u)|\,r_j(u)\,d\gamma_\infty (u) \\
 & \le \bar N        \int_{4B_j} \,\sup_{t\in[|x_j|^{-2},1]}
| K_t(x,u)|    \, |f(u)|\,  d\gamma_\infty (u) \,\mathbbm 1_{ 6 B_j}(x)\notag    \\
&\lesssim \bar N\,    e^{R(x_j)}\,|x_j|^n
   \,
\|f\|_{L^1(4B_j, \,\gamma_\infty)}  \,\mathbbm 1_{6 B_j}(x)     
    \\
   &\lesssim   \big|4 B_j \big|^{-1}\|f\|_{L^1(4B_j, du)}  \,\mathbbm 1_{ 6 B_j}(x).
  \end{align}
 
  The last expression is a simple mean value of $|f|$, and the strong type  $(p,p)$ for all $p \ge 1$ is clear.
   \end{proof}


\bigskip

  \section{The local case for small $t$   
   : the decomposition}\label{s:dec}
   {\color{black}{
     In this and the following sections, we keep  $j \in \mathbb N$  fixed and prove Proposition \ref{essential} in the remaining case
  $0 < t \le \min(1, |x_j|^{-2})$. For these $t$ one has $t \lesssim 1/|x|^2$, as seen by considering small and large $x_j$ separately. 
  
  The operator  $\mathcal H_t^{\mathrm{loc}} $}} will be decomposed into  a sum of three difference operators 
and a main operator. 
More precisely,       
   we write
\begin{align}\label{eq:sum_Ht}
 \mathcal H_t^{\mathrm{loc}}  f(x)=\sum_{\kappa=1}^3 \Delta_t^{(\kappa)} f(x)  + M_t f(x),
\end{align}
where
\begin{align}\label{def_Deltat}
\Delta_t^{(\kappa)} f(x)&=\int_{\R^n}\Big( \widetilde  K_t^{(\kappa-1)}(x,u)-  \widetilde K_t^{(\kappa)}(x,u)\Big)\, f(u)\,d\gamma_\infty(u),\qquad \kappa=1,2,3,\qquad\\
M_t f(x)&=
\int_{\R^n}\widetilde K_t^{(3)}(x,u)\, f(u)\,d\gamma_\infty(u),\label{def_Mt}
\end{align}
 and the kernels $ \widetilde  K_t^{(\kappa)}(x,u)$, \hskip3pt  $\kappa=1,2,3$, are given, respectively, by
  \begin{align}
  \widetilde K_t^{(0)}(x,u)&= K_t(x,u)\,\notag\\
  \widetilde K_t^{(1)}(x,u)&=
\Big(\frac{
 \det Q_\infty}{\det Q_t}\Big)^{1/2}\,    e^{R(x)}\, \exp\Big( -\frac12\,
  \langle (Q_t^{-1} -Q_\infty^{-1})(u- x), \,u- x \rangle   \Big) ,\label{def:tildeKt} \qquad\\
  \widetilde K_t^{(2)}(x,u)&=
\Big(\frac{
 \det Q_\infty}{\det Q_t}\Big)^{1/2}\,    e^{R(x)}\, \exp\left( -\frac1{2t}\,
  \big|Q^{-1/2}(u- x)\big|^2   \right), \label{def:tildeKt_two} \\
  \widetilde K_t^{(3)}(x,u)&=
\frac{(\det Q_\infty)^{1/2}}{ { (\det  Q)}^{1/2}\,  t^{n/2}}\,    e^{R(x)}\, \exp\left( -\frac1{2t}\,
  \big|Q^{-1/2}(u- x)\big|^2   \right) .
\label{def:tildeKt_three} 
  \end{align}

 {\color{black}{
  In the next section, we shall see that the three difference operators satisfy good variational inequalities for all $\varrho \ge 1$.
  Thus the critical    part of $\mathcal H_t^{\mathrm{loc}}$    or    $\mathcal H_t$      is                                 
   $M_t$.  
   One notices  that    $M_t$
 basically coincides, up to some constants and the factor $e^{R(x)}$, 
            with the operator with kernel  $K_t^c(x-u)$ introduced in \cite[formula (8.3)]{CCS8}. 
The results in  \cite[Section 8]{CCS8} therefore imply that  
the variation operator
  \begin{equation*}
    f \mapsto \|M_t f\|_{v(2), \R_+}                             
  \end{equation*}
  is not of strong nor weak type $(p,p)$ with respect to $\gamma_\infty$,  for any $p \in [1,\infty)$.
  It follows that the same negative result holds  for  $\mathcal H_t^{\mathrm{loc}}$    and    $\mathcal H_t$.

However,
 we shall prove in Section \ref{s:The main operator} that jump 
 quasi-seminorms with $\varrho = 2$ behave well for  $M_t$, in the way stated in Theorem \ref{thm}.

  Before considering $M_t$,  we  study the three difference operators.                                    
}}
\medskip

\section{The local case for small $t$      
: the difference operators     
}\label{subsection:diff}
In order to  estimate the variation  of the difference operators,
we shall apply 
\eqref{important} to each   $\Delta_t^{(\kappa)}$.

The necessary  bounds on the $t$ derivatives of the kernels of  $\Delta_t^{(\kappa)}$
will be proved  in the following subsection. 

\subsection{Some auxiliary bounds}\label{ss:auxiliary_bounds}

First of all, we provide expressions for $\partial_t \,\widetilde K_t^{(\kappa)}$,  similar to \eqref{def:Kt_deriv}.

\begin{lemma}\label{derivate-nucleo}
For all $(x,u) \in \mathbb R^n\times \mathbb R^n$, \hskip4pt
$t>0$ and $\kappa=1,2,3$, 
we have
\begin{align}\label{def:der_t_Kt}
\partial_t \, \widetilde K_t^{(\kappa)}(x,u) = \widetilde K_t^{(\kappa)}(x,u) \,   N_t^{(\kappa)}(x,u),
\end{align}
where 
\begin{align}
 N_t^{(1)} (x,u)&=
 -\frac12\,{\tr
\big(Q_t^{-1} \, e^{tB}\, Q\, e^{tB^*}\big)}
+\frac12\,
\left| Q^{1/2}\, e^{tB^*}\, Q_t^{-1}\,(u-\, x)
\right|^2,\label{RR}\\
  N_t^{(2)} (x,u)&=
 -\frac12\,{\tr
\big(Q_t^{-1} \, e^{tB}\, Q\, e^{tB^*}\big)}
+\frac1{2t^2}\,
\left|Q^{-1/2} (u- x)
\right|^2, \label{RRaa}
\\
  N_t^{(3)} (x,u)&=
 -\frac{n}{2t}
+\frac1{2t^2}\,
\left|Q^{-1/2} (u- x)
\right|^2.\label{RRthree}
\end{align}
\end{lemma}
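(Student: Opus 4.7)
The statement is a direct differentiation, so the plan is purely computational: for each $\kappa$ write $\widetilde K_t^{(\kappa)}(x,u) = A_\kappa(t)\,e^{R(x)}\,e^{E_\kappa(t,x,u)}$, logarithmically differentiate, and identify the result with the stated $N_t^{(\kappa)}$.

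First I would treat the prefactor. For $\kappa = 1,2$ the prefactor is $(\det Q_\infty/\det Q_t)^{1/2}$. By Jacobi's formula, $\partial_t \log \det Q_t = \tr(Q_t^{-1}\,\partial_t Q_t)$, and from the defining formula \eqref{defQt} we have $\partial_t Q_t = e^{tB} Q\, e^{tB^*}$. Hence
\begin{equation*}
\frac{\partial_t A_\kappa(t)}{A_\kappa(t)} = -\tfrac{1}{2}\tr\bigl(Q_t^{-1}\,e^{tB} Q\, e^{tB^*}\bigr), \qquad \kappa=1,2,
\end{equation*}
which is precisely the first term of $N_t^{(1)}$ and $N_t^{(2)}$. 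For $\kappa=3$ the prefactor is a constant times $t^{-n/2}$, so its log-derivative is $-n/(2t)$, matching the first term of $N_t^{(3)}$.

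Next I would compute $\partial_t E_\kappa$. For $\kappa=1$, $E_1 = -\tfrac12\langle (Q_t^{-1}-Q_\infty^{-1})(u-x),u-x\rangle$, and using $\partial_t Q_t^{-1} = -Q_t^{-1}(\partial_t Q_t) Q_t^{-1} = -Q_t^{-1} e^{tB} Q e^{tB^*} Q_t^{-1}$ together with the symmetry of $Q$, one obtains
\begin{equation*}
\partial_t E_1 = \tfrac{1}{2}\bigl\langle Q_t^{-1} e^{tB} Q\, e^{tB^*} Q_t^{-1}(u-x),\,u-x\bigr\rangle = \tfrac{1}{2}\bigl|Q^{1/2}\,e^{tB^*}\,Q_t^{-1}(u-x)\bigr|^2,
\end{equation*}
which is the second term of $N_t^{(1)}$. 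For $\kappa=2,3$, $E_\kappa = -(2t)^{-1}|Q^{-1/2}(u-x)|^2$, whence $\partial_t E_\kappa = (2t^2)^{-1}|Q^{-1/2}(u-x)|^2$, matching the second terms of $N_t^{(2)}$ and $N_t^{(3)}$.

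Combining prefactor and exponent contributions via $\partial_t \widetilde K_t^{(\kappa)} = \widetilde K_t^{(\kappa)}\,(\partial_t A_\kappa/A_\kappa + \partial_t E_\kappa)$ yields \eqref{def:der_t_Kt} for each $\kappa$. There is no real obstacle here; the only point that requires a moment of care is applying $\partial_t Q_t^{-1} = -Q_t^{-1}(\partial_t Q_t)Q_t^{-1}$ together with the symmetry of $Q$, $Q_t$ and $Q_\infty$ to rewrite the resulting quadratic form as the squared norm appearing in $N_t^{(1)}$.
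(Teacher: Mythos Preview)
Your proof is correct and is essentially the paper's approach: the paper simply says to differentiate the defining expressions and apply \cite[Lemma~4.1]{CCS5}, which packages the same identities you use (Jacobi's formula for $\partial_t\log\det Q_t$ and $\partial_t Q_t^{-1}=-Q_t^{-1}(\partial_t Q_t)Q_t^{-1}$ with $\partial_t Q_t=e^{tB}Qe^{tB^*}$). Your write-up just makes the computation explicit rather than citing it.
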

\begin{proof}
 Differentiating \eqref{def:tildeKt}, \eqref{def:tildeKt_two} and \eqref{def:tildeKt_three} with respect to $t$ and  applying 
 \cite[Lemma~ 4.1]{CCS5},
one easily obtains  \eqref{def:der_t_Kt}.
\end{proof}

In the following two lemmata we shall bound  the $t$ derivative of $\widetilde  K_t^{(\kappa-1)}(x,u)-  \widetilde K_t^{(\kappa)}(x,u)$ for $\kappa=1,2,3$, and first write
\begin{align}\label{diff_der}
& \partial_t\left(\widetilde K_t^{(\kappa-1)}(x,u)- \widetilde K_t^{(\kappa)}(x,u)\right)\\
=\,&\widetilde K_t^{(\kappa-1)}(x,u)\,\left( N_t^{(\kappa-1)}- N_t^{(\kappa)}\right)(x,u)
+ N_t^{(\kappa)}(x,u)\left( \widetilde K_t^{(\kappa-1)}  - \widetilde K_t^{(\kappa)} \right)(x,u). \notag 
\end{align}
We start by examining  this difference         
for $\kappa=1$.
\begin{lemma}\label{lemma:est_for_diff_op}
For  $x,\:u\in 6B_j$ and
$0 < t \le \min(1,|x_j|^{-2})$,
one has
 \begin{align}            \label{claim_der}
\left| \partial_t\left( K_t(x,u)- \widetilde K_t^{(1)}
(x,u)\right) \right|&\lesssim (1+|x|)\, \frac {e^{R(x)}} { t^{(n+1)/2} }\,\exp\left(-c\,\frac{|u-x|^2}t\right).
\end{align}
\end{lemma}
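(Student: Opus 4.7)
The plan is to apply the identity \eqref{diff_der} with $\kappa=1$ and bound the two resulting summands
\[
K_t(x,u)\bigl(N_t^{(0)}-N_t^{(1)}\bigr)(x,u)\qquad\text{and}\qquad N_t^{(1)}(x,u)\bigl(K_t-\widetilde K_t^{(1)}\bigr)(x,u)
\]
separately. The essential inputs are $|x-D_t x|\simeq t|x|$ from \eqref{dtx}, the size bound $|Q_t^{-1}|\lesssim 1/t$ from \eqref{qt3}, the proximity $|u-x|\lesssim 1/(1+|x_j|)$ coming from $u,x\in 6B_j$, and, crucially, the constraint $t|x|^2\lesssim 1$, which holds because $t\le|x_j|^{-2}$ and $|x|\simeq 1+|x_j|$ on $6B_j$ for $j\ge 1$ (it is trivial when $j=0$). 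This constraint is equivalent to $|x|^2\lesssim (1+|x|)/\sqrt t$, which will let me trade every occurrence of $|x|^2$ arising from polarization for a single factor $(1+|x|)/\sqrt t$, exactly the excess over the basic Mehler bound \eqref{litet} that \eqref{claim_der} requires.

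For the first summand, setting $v=u-x$ and $w=u-D_t x$, the trace parts of $N_t^{(0)}$ and $N_t^{(1)}$ cancel, leaving only the quadratic difference $\tfrac12(|Mw|^2-|Mv|^2)$ with $M=Q^{1/2}e^{tB^*}Q_t^{-1}$, together with the cross term in $N_t^{(0)}$. Polarizing as $\langle M(w-v),M(w+v)\rangle$ and using $|w-v|=|x-D_t x|\simeq t|x|$, $|w+v|\lesssim|u-x|+t|x|$, and $|M|\lesssim 1/t$ (the cross term is handled analogously, via $|Q_t^{-1}-Q_\infty^{-1}|\lesssim 1/t$) gives $|N_t^{(0)}-N_t^{(1)}|\lesssim |x||u-x|/t+|x|^2$. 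Multiplying by the upper bound in \eqref{litet} and replacing $\exp(-c|u-D_t x|^2/t)$ by $\exp(-c'|u-x|^2/t)$---legitimate because $|u-D_t x|^2\ge\tfrac12|u-x|^2-|x-D_t x|^2$ and $|x-D_t x|^2/t\lesssim t|x|^2\lesssim 1$---and then absorbing $|u-x|/\sqrt t$ into the Gaussian via $(s/\sqrt t)e^{-cs^2/t}\lesssim e^{-c's^2/t}$ produces the desired bound for this piece.

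For the second summand, since $K_t$ and $\widetilde K_t^{(1)}$ share the same prefactor, the elementary inequality $|e^a-e^b|\le|a-b|\max(e^a,e^b)$ combined with
\[
\tfrac12\bigl|\langle Aw,w\rangle-\langle Av,v\rangle\bigr|\lesssim |x||u-x|+t|x|^2,\qquad A=Q_t^{-1}-Q_\infty^{-1},
\]
and the same Gaussian comparison as above (using also $(Q_t^{-1}-Q_\infty^{-1})\gtrsim 1/t$, which follows from \eqref{qt1} for $t$ small) yields $|K_t-\widetilde K_t^{(1)}|\lesssim t^{-n/2}e^{R(x)}(|x||u-x|+t|x|^2)\exp(-c|u-x|^2/t)$. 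Formula \eqref{RR} at once gives $|N_t^{(1)}|\lesssim 1/t+|u-x|^2/t^2$. Expanding the product of these two estimates generates four terms, each of which reduces to $(1+|x|)t^{-(n+1)/2}e^{R(x)}\exp(-c'|u-x|^2/t)$ after absorbing polynomial factors in $|u-x|/\sqrt t$ and $|u-x|^2/t$ into the Gaussian tail and using $|x|^2\lesssim(1+|x|)/\sqrt t$. The main obstacle in the whole argument is precisely this bookkeeping: making sure that every polynomial factor produced by polarization is either swallowed by the Gaussian or traded against the hypothesis $t|x|^2\lesssim 1$, leaving only the single excess factor $(1+|x|)/\sqrt t$ on top of the basic Mehler estimate.
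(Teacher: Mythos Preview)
Your proposal is correct and follows essentially the same route as the paper: the same splitting via \eqref{diff_der}, the same bounds $|N_t^{(0)}-N_t^{(1)}|\lesssim |x||u-x|/t+|x|^2$ and $|N_t^{(1)}|\lesssim 1/t+|u-x|^2/t^2$, the same estimate for $|K_t-\widetilde K_t^{(1)}|$, and the same absorption of polynomial factors into the Gaussian combined with $t|x|^2\lesssim 1$. The only cosmetic differences are that the paper converts $\exp(-c|u-D_tx|^2/t)$ to $\exp(-c'|u-x|^2/t)$ via the product bound $|u-x|\,|x|\lesssim 1$ (from $x,u\in 6B_j$) rather than your Young-type inequality $|u-D_tx|^2\ge\tfrac12|u-x|^2-|x-D_tx|^2$, and it factors $K_t-\widetilde K_t^{(1)}=\widetilde K_t^{(1)}(\exp E-1)$ with $|E|\lesssim 1$ instead of using $|e^a-e^b|\le|a-b|\max(e^a,e^b)$; both pairs of arguments are equivalent in strength.
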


\begin{proof}
                      We first  bound $N_t^{(0)}(x,u)- N_t^{(1)}(x,u)$, writing
\begin{align*}
 \Big|N_t^{(0)}(x,u)&- N_t^{(1)}(x,u)\Big|=
 \Big|
 \frac12\,
\left| Q^{1/2}\, e^{tB^*}\, Q_t^{-1}\,(u-D_t\, x)
\right|^2
\\
 &
-
\left\langle Q_\infty \,B^*\, Q_\infty^{-1}\, D_t\, x\,,\,
(Q_t^{-1}-Q_\infty^{-1})\,(u-D_t\, x)\right\rangle-\frac12\,
\left| Q^{1/2}\, e^{tB^*}\, Q_t^{-1}\,(u-\, x)
\right|^2\Big|\\
 &\lesssim
 \left|\left\langle Q_\infty \,B^*\, Q_\infty^{-1}\, D_t\, x\,,\,
(Q_t^{-1}-Q_\infty^{-1})\,(u-D_t\, x)\right\rangle
  \right|
  \\&\qquad\quad
  +\Big|\frac12
  \left| Q^{1/2}\, e^{tB^*}\, Q_t^{-1}\,(u- D_t\, x)
\right|^2-\frac12\left| Q^{1/2}\, e^{tB^*}\, Q_t^{-1}\,(u-\, x)
\right|^2
  \Big|
 =:A+B.
 \end{align*}
 Using \eqref{qt3} and then \eqref{dtx}, we obtain
\begin{align*}
 A &\lesssim \frac{|x|}t \,|u-D_t \,x|\lesssim \frac{|x|}t \left(|u- x|+|x-D_t \,x|\right)  \lesssim
 \frac{|x|}t  \, |u- x|+ |x|^2.
\end{align*}
In a similar way,
\begin{align*}
B &=\Big|\frac12\left| Q^{1/2}\, e^{tB^*}\, Q_t^{-1}\,(u- x+x-D_t\, x)
\right|^2-\frac12\left| Q^{1/2}\, e^{tB^*}\, Q_t^{-1}\,(u-\, x)
\right|^2
  \Big|\\
 &=\Big|\frac12\left| Q^{1/2}\, e^{tB^*}\, Q_t^{-1}\,(x-D_t\, x)
\right|^2+\langle
Q^{1/2}\, e^{tB^*}\, Q_t^{-1}\,(u- x), Q^{1/2}\, e^{tB^*}\, Q_t^{-1}\,(x-D_t\, x)
\rangle
  \Big|\\
  &\lesssim |x|^2+\frac1{t^2}\,|u-x| \,t|x|\lesssim
  |x|^2+\frac{|x|}{t}\,|u-x|,
\end{align*}
and thus 
\begin{align*}
 \left|N_t^{(0)}(x,u)- N_t^{(1)}(x,u)\right|\lesssim
 |x|^2+\frac{|x|}{t}\,|u-x|.
\end{align*}
Hence, we have by \eqref{litet}
\begin{align} \label{KNN}
&\left| K_t(x,u) \,\left( N_t^{(0)}(x,u)- N_t^{(1)}(x,u)\right)\right|  \notag\\
&
\lesssim \, \frac{ e^{R( x)}}{t^{n/2}} \exp\left(-c\,\frac{|u-D_t\, x |^2}t\right)\left( |x|^2+\frac{|x|}{t}\,|u-x|\right)     \notag\\
&\lesssim \, \frac{ e^{R( x)}}{t^{n/2}} \,\exp\left(-c\,\frac{|u- x |^2}t\right)\,\exp(C\,{|u-x| \,|x|})
\,\left( |x|^2+\frac{|x|}{t}\,|u-x|\right),
\end{align}
the last step since
\begin{align*}|u - D_t\,x|^2 & =  |u-x|^2 + 2 (u-x)\cdot (x -D _t\,x)+  |x -D _t\,x|^2 \\
& \ge  |u-x|^2 - 2C|u-x|\,t\, |x|.
\end{align*}

To estimate the last, polynomial factor in \eqref{KNN}, we can replace $|u-x|$ by $\sqrt{t}$, if we also reduce the constant $c$ in the same formula. Further, $|x|^2 \le |x|/\sqrt{t}$ by our assumption on $t$, and so the polynomial factor is controlled by $|x|/\sqrt{t}$. The points $x$ and $u$ both have distance at most $6/(1+|x_j|)$ from $x_j$, which implies 
 \begin{align} \label{product}
 |u-x|\,|x| \lesssim (1+|x_j|)^{-1}\,(1+|x_j|) = 1.
  \end{align}
 Thus the last exponential factor in \eqref{KNN} is harmless, and we conclude that
 \begin{align}\label{est_1_der}
\left| K_t(x,u) \,\big( N_t^{(0)}(x,u)-N_t^{(1)}(x,u)\big)\right|
\lesssim
  \frac{ e^{R( x)}}{t^{n/2}}\, \exp\left(-c\,\frac{|u- x |^2}t\right) \,\frac{|x|}{\sqrt t}.
 \end{align}

 \medskip
 
Next we consider the term $N_t^{(1)}(x,u)\big( K_t(x,u) - \widetilde K_t^{(1)}(x,u) \big)$,
and  first observe that in view of \eqref{qt3}
  \begin{align}\label{est_Ntildet}
 \big| N_t^{(1)}(x,u)\big|
\lesssim \frac1t+\frac{|x-u|^2}{t^2 }.
\end{align}

We rewrite the long exponent in the expression  \eqref{mehler} for $K_t(x,u)$ by replacing $u-D_t x$ by $u-x+x-D_t x$ and expanding the scalar product. This leads to
\begin{align} \label{KK}
~\\
-\frac12 \left\langle \left(
Q_t^{-1}-Q_\infty^{-1}\right) (u-D_t x) , u-D_t x\right\rangle = 
-\frac12 \left\langle \left(
Q_t^{-1}-Q_\infty^{-1}\right) (u- x) , u- x\right\rangle +E,\\
~
\end{align}
where 
 \begin{align*}
E &=
{-\left\langle \left(
Q_t^{-1}-Q_\infty^{-1}\right) (u-x) , x-D_t x\right\rangle}
-\frac12  \left\langle \big(Q_t^{-1} -Q_\infty^{-1}\big)(x-D_t x), \, x-D_t x \right\rangle.
\end{align*}
From this and the expression \eqref{def:tildeKt} for $\widetilde K_t^{(1)}$,  we see that we can factor out from  $K_t(x, u) - \widetilde K_t^{(1)} (x, u)$   the exponential  of the first right-hand term in \eqref{KK}, and get
\begin{align} \label{Kdiff}
& K_t(x,u)-\widetilde K_t^{(1)}(x,u)\\&=
\left(
\frac{\det \, Q_\infty}{\det \, Q_t}
\right)^{{1}/{2} }
e^{R(x)}\notag   
 \exp\Big( -\frac12
  \left\langle (Q_t^{-1} -Q_\infty^{-1})(u- x), \,u- x \right\rangle   \Big)\,(\exp E-1).
  \end{align}
We  estimate $E$ by means of \eqref{dtx},  \eqref{product} and the bound 
$t \lesssim 1/|x|^2$, obtaining
\begin{align*}
 |E|\lesssim\ |x|\,|u-x|+t|x|^2\lesssim 1.
\end{align*}
Thus 
\begin{align*}
 |\exp E -1|\lesssim |E|\lesssim\ |x|\,|u-x|+t|x|^2,
\end{align*}
and 
\begin{align}                  \label{K-K}
 \left|K_t(x,u)-\widetilde K_t^{(1)}(x,u)\right| \lesssim  \frac{ e^{R( x)}}{t^{n/2}} \, 
  \exp\left( -c \, \frac{ |u- x|^2}{t}   \right) \,\left(|x|\,|u-x|+t|x|^2\right).
  \end{align}
  The polynomial factor here is $t$ times the one in \eqref{KNN}, and arguing as there, we can replace it by $|x|\sqrt{t}$.

Combined with 
\eqref{est_Ntildet}, this yields
\begin{align*}
 \Big| N_t^{(1)}(x,u)\Big(K_t(x,u)-\widetilde K_t^{(1)}(x,u)\Big)\Big|\notag
& \lesssim 
 \frac{ e^{R( x)}}{t^{n/2}} \,
 \exp\left( -c\,
 \frac{ |u- x|^2}{t}   \right)
\,    |x|\sqrt{t} \,                                 
 \left( \frac1t+\frac{|x-u|^2}{t^2 }
\right)
\notag\\
& \lesssim 
 \frac{ e^{R( x)}}{t^{n/2}} \,
 \exp\left( -c\,
 \frac{ |u- x|^2}{t} \,  \right) \,\frac{ |x|}{\sqrt{t}}.   
 \end{align*}
Here we reduced the constant $c$ in order to replace $|u-x|$ by $\sqrt t$ in the polynomial factor.

This estimate 
and  \eqref{est_1_der} imply  Lemma \ref{lemma:est_for_diff_op}.                    
\end{proof}

In the following lemma it is shown that the remaining two differences
 satisfy better bounds.

\begin{lemma}\label{lemma:est_for_diff_op_two}
Let $x,\; u\in 6B_j$. 
Then for all 
$0 < t \le \min(1, |x_j|^{-2}) $
one has
 \begin{align*}
 \Big| \partial_t\Big( \widetilde K_t^{(\kappa-1)}(x,u)- \widetilde K_t^{(\kappa)}(x,u)\Big) \Big|&\lesssim 
 \frac{ e^{R( x)}}{t^{n/2}} \,\exp\left(-c\,\frac{|u-x|^2}t\right), \quad \kappa=2,3.\quad
\end{align*}
\end{lemma}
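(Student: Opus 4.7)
The plan is to start from the identity \eqref{diff_der}, which splits the derivative into two pieces, and then to control each piece using the expansions \eqref{qt1}, \eqref{qt2} together with the localization bounds $|u-x|\lesssim 1/(1+|x_j|)$ and $t|x|^2\lesssim 1$ valid on $6B_j$. The key observation is that the three kernels $\widetilde K_t^{(1)},\widetilde K_t^{(2)},\widetilde K_t^{(3)}$ are built from matrices that agree with $t^{-1}Q^{-1}$ (and determinants agreeing with $t^n\det Q$) to leading order as $t\to 0^+$, so that, compared with the comparison of $K_t$ and $\widetilde K_t^{(1)}$ carried out in Lemma \ref{lemma:est_for_diff_op}, we gain an extra factor of $t$ (or equivalently, lose the factor $|x|/\sqrt t$ that caused the blow-up there).

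For $\kappa=2$, I would observe that $N_t^{(1)}-N_t^{(2)}$ equals $\tfrac12 |Q^{1/2}e^{tB^*}Q_t^{-1}(u-x)|^2-\tfrac1{2t^2}|Q^{-1/2}(u-x)|^2$. Writing $Q^{1/2}e^{tB^*}Q_t^{-1}=t^{-1}Q^{-1/2}+M_t$ with $|M_t|\lesssim 1$ by \eqref{qt1}, I expand the square and bound the cross term by $t^{-1}|u-x|^2$ and the remainder by $|u-x|^2$; hence $|N_t^{(1)}-N_t^{(2)}|\lesssim |u-x|^2/t$. Since $Q_t^{-1}-Q_\infty^{-1}\simeq t^{-1}Q^{-1}$ on $(0,1]$, the Gaussian factor in $\widetilde K_t^{(1)}$ yields $|\widetilde K_t^{(1)}|\lesssim t^{-n/2}e^{R(x)}\exp(-c|u-x|^2/t)$, and absorbing the polynomial $|u-x|^2/t$ into the exponential gives the desired bound for the first summand of \eqref{diff_der}. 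For the second summand, I write
\[
\widetilde K_t^{(1)}-\widetilde K_t^{(2)}=\Big(\tfrac{\det Q_\infty}{\det Q_t}\Big)^{1/2}e^{R(x)}e^{-b}(e^{b-a}-1),
\]
where $a,b$ are the two quadratic forms in the exponents. By \eqref{qt1} the matrix $Q_t^{-1}-Q_\infty^{-1}-(tQ)^{-1}$ is $O(1)$, so $|a-b|\lesssim |u-x|^2\lesssim 1$; the mean-value argument (or an integral along the segment from $a$ to $b$) then yields $|\widetilde K_t^{(1)}-\widetilde K_t^{(2)}|\lesssim t^{-n/2}e^{R(x)}|u-x|^2\exp(-c|u-x|^2/t)$. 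Multiplying by $|N_t^{(2)}|\lesssim 1/t+|u-x|^2/t^2$ (from \eqref{qt3}) and absorbing polynomials into the exponential closes this case.

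For $\kappa=3$ the same scheme is even easier. By \eqref{qt1}, $Q_t^{-1}e^{tB}Qe^{tB^*}=t^{-1}I+O(1)$, so the two trace terms in $N_t^{(2)}$ and $N_t^{(3)}$ differ by $O(1)$; hence $|N_t^{(2)}-N_t^{(3)}|\lesssim 1$, and multiplying by $\widetilde K_t^{(2)}$ immediately yields the required bound for the first summand of \eqref{diff_der}. For the second, the exponential parts of $\widetilde K_t^{(2)}$ and $\widetilde K_t^{(3)}$ coincide, so the difference is controlled by the difference of the prefactors; \eqref{qt2} gives
\[
(\det Q_t)^{-1/2}-(t^n\det Q)^{-1/2}=(t^n\det Q)^{-1/2}\bigl((t^n\det Q/\det Q_t)^{1/2}-1\bigr)=O(t^{1-n/2}),
\]
so $|\widetilde K_t^{(2)}-\widetilde K_t^{(3)}|\lesssim t\cdot t^{-n/2}e^{R(x)}\exp(-c|u-x|^2/t)$. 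Multiplying by $|N_t^{(3)}|\lesssim 1/t+|u-x|^2/t^2$ and again absorbing the polynomial in the exponential yields the claim.

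The only real obstacle is bookkeeping: one has to track, in each of the two terms of \eqref{diff_der}, which factor carries the extra power of $t$ that compensates the $1/t$ (or $1/t^2$) coming from $N_t^{(\kappa)}$ or from the derivative; the uniform smallness of $|u-x|$ and of $t|x|^2$ on $6B_j$ then makes every correction harmless. No new idea beyond the expansions \eqref{qt1}, \eqref{qt2} and the estimate \eqref{qt3} is needed; the proof is a parallel but strictly simpler version of that of Lemma \ref{lemma:est_for_diff_op}, with the pleasant feature that neither the factor $|x|/\sqrt t$ nor the displacement $x-D_t x$ enters the computation.
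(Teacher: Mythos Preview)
Your proposal is correct and follows essentially the same approach as the paper's proof: you use the splitting \eqref{diff_der}, bound $N_t^{(\kappa-1)}-N_t^{(\kappa)}$ and $\widetilde K_t^{(\kappa-1)}-\widetilde K_t^{(\kappa)}$ via the expansions \eqref{qt1}, \eqref{qt2} exactly as the paper does, and then absorb the resulting polynomial factors into the Gaussian. The only cosmetic difference is that for the second summand in the case $\kappa=2$ you factor out $e^{-b}$ rather than repeating the ``analogous to \eqref{KK}'' argument, but this is the same estimate.
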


\begin{proof}
We start with the case $\kappa=2$, using \eqref{diff_der}.
  By means of \eqref{qt1} and the fact that $e^{tB^*} = I_n(1+\mathcal O(t))$ as $t\to 0$,            
one finds that 
\begin{align*}
    \left| N_t^{(1)}(x,u)- N_t^{(2)}(x,u)\right| &=\frac12 \, \Big| 
\big| Q^{1/2}\, e^{tB^*}\, Q_t^{-1}\,(u-\, x)
\big|^2-\frac1{t^2}\,
\left|Q^{-1/2} (u- x)
\right|^2 \Big|  \\
 &\lesssim\frac1t\,|u-x|^2.
  \end{align*}

Thus by \eqref{def:tildeKt}  we have
\begin{align}\label{other_diff}
\left| \widetilde K_t^{(1)}(x,u) \,\big( N_t^{(1)}(x,u)- N_t^{(2)}(x,u)\big)\right| 
\lesssim \, \frac{ e^{R( x)}}{t^{n/2}} \,\exp\left(-c\,\frac{|u- x |^2}t\right),
\end{align}
for some $c>0$.
 \medskip
To consider the term $ N_t^{(2)}(x,u)\big( \widetilde K_t^{(1)}(x,u) - \widetilde K_t^{(2)}(x,u) \big)$,
we first notice that
  \begin{align}\label{est_Ntildet_two}
   \left|  N_t^{(2)}(x,u)\right|
      \lesssim \frac1 t + \frac{|x-u|^2}{t^2 }.
\end{align}
  Then we observe that \eqref{qt1} leads to
 \begin{equation*}
  -\frac12
  \langle (Q_t^{-1} -Q_\infty^{-1})(u- x), \,u- x \rangle = -\frac1{2t} \, |Q^{-1/2}(u- x)|^2  + \mathcal O(|u- x|^2),
  \qquad t\to 0.
 \end{equation*} 
This is analogous to \eqref{KK}, and arguing as in the steps leading from \eqref{KK} to \eqref{K-K}, we obtain
since $|u- x| \lesssim 1$
\begin{align*}
& \left|
\widetilde K_t^{(1)}(x,u) - \widetilde K_t^{(2)}(x,u) \right|
 \lesssim \frac{ e^{R(x)}}{t^{n/2}}
 \, \exp\left( -c\,
 \frac{ |u- x|^2}{t}   \right)\, |u- x|^2.
  \end{align*}
  
Combined with 
\eqref{est_Ntildet_two}, this yields
\begin{align*}               
 \Big| 
  N_t^{(2)}(x,u)&\left( \widetilde K_t^{(1)}(x,u) - \widetilde K_t^{(2)}(x,u) \right)
 \Big|  \notag \\
& \lesssim \frac{ e^{R(x)}}{t^{n/2}}
 \,
{\cancel{{e^{R(x)}}}}  \exp\left( -c\,
 \frac{ |u- x|^2}{t}   \right)\,  
 \left( \frac{|u- x|^2}t+\frac{|u-x|^4}{t^2 }
\right)\, 
\notag \\
& \lesssim 
\frac{ e^{R(x)}}{t^{n/2}}
 \, \exp\left( -c\,
 \frac{ |u- x|^2}{t}   \right),  
 \end{align*}
 with a modified value of $c$.

   The estimate of the lemma for       $\kappa=2$ follows from this and \eqref{other_diff}.
\medskip

Moving to $\kappa=3$, we observe by \eqref{qt1} that
\begin{align*}
 \left|N_t^{(2)}(x,u)-N_t^{(3)}(x,u)\right|
& =
 \Big|  -\frac12\,{\tr
\big(Q_t^{-1} \, e^{tB}\, Q\, e^{tB^*}\big)}+
\frac{n}{2t} \Big|
\\
&=
 \left|  -\frac1{2t}\,{\tr
\big( \, I_n+\mathcal O(t) \big)}+\frac{n}{2t}\right|
=\mathcal O(1),
 \end{align*}
and \eqref{def:tildeKt_two} implies
\begin{align}\label{other_diff_three}
\left|\widetilde K_t^{(2)}(x,u) \left( N_t^{(2)}(x,u)-N_t^{(3)}(x,u)\right)\right|
\lesssim  \frac{e^{R( x)}}{t^{n/2}} \exp\left(-c\,\frac{|u- x |^2}t\right).
\end{align}
Then we consider the term 
$N_t^{(3)}(x,u)\left( \widetilde K_t^{(2)}(x,u) - \widetilde K_t^{(3)}(x,u) \right)$
and observe that
  \begin{align}\label{est_Ntildet_three}
 \left| N_t^{(3)}(x,u)\right|
\lesssim \frac1t+\frac{|x-u|^2}{t^2 }.
\end{align}
From \eqref{qt2} we see that
 \begin{align*}
& \left|
\widetilde K_t^{(2)}(x,u) - \widetilde K_t^{(3)}(x,u) \right|
\\
&= \big({
 \det Q_\infty}\big)^{1/2}    e^{R(x)}
   \exp\left( -\frac1{2t}\,
  \left|Q^{-1/2}(u- x)\right|^2   \right)
  \left| \frac1{{(\det Q_t)}^{1/2}}-\frac1{ (\det  Q)^{1/2}\,t^{n/2}}\right|
 \\
& \lesssim   \frac{e^{R(x)}} {t^{n/2}}\,  
    \exp\Big( -\frac1{2t}\,   |u- x|^2     \Big)\,t.
    \end{align*}

   With 
\eqref{est_Ntildet_three} we then have
\begin{align*}           
 \Big| 
 N_t^{(3)}(x,u)\,&\Big( \widetilde K_t^{(2)}(x,u) - \widetilde K_t^{(3)}(x,u) \Big)
 \Big|\notag
  \\
& \lesssim 
  \frac{e^{R(x)}}{\,t^{n/2}}\, 
   \exp\left( -\frac1{2t}\,
  |u- x|^2   \right)\, 
 \left( 1+\frac{|x-u|^2}{t }
\right)\, 
\notag\\
& \lesssim 
\frac{ e^{R(x)}}{t^{n/2}}
 \, \exp\left( -c\,
 \frac{ |u- x|^2}{t}   \right),  
 \end{align*}
Together with  \eqref{other_diff_three}, this proves  the lemma  for $\kappa=3$.
  \end{proof}

\subsection{Boundedness of the variation operator for the differences}\label{ss:Delta_main_result}

By means of the estimates proved in the previous subsection,
we can finally estimate the variation of  $ \Delta_t^{(\kappa)} \,f$, for $\kappa=1,2,3$.


  \begin{proposition} \label{pr:difference}
 Let  $1\le \varrho <\infty$ and $j \in \mathbb{N}$.   
 The operator that maps   $f \in L^1(\gamma_\infty)$ to the function
  \begin{equation*}
x \mapsto  \|\widetilde r_j(x) \,\Delta_t^{(\kappa)} \,(f\,r_j)(x)\|_{v(\varrho), (0, \min(1,|x_j|^{-2}) ]}, \qquad \;\kappa=1,2,3,
\end{equation*}
is of strong type $(p,p)$ for all $p \ge 1$ with respect to the invariant (and Lebesgue) measure.
This is uniform in $j$.
\end{proposition}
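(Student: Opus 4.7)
The plan is to apply the reduction \eqref{important} to each of the three difference operators, which is justified exactly as for $\mathcal H_t$ in \cite[Subsection 5.1]{CCS5}. This reduces matters to estimating the $L^p$-norm of the integral operator with kernel
\begin{equation*}
 \widetilde r_j(x)\, r_j(u) \int_0^{\min(1,|x_j|^{-2})} \bigl|\partial_t\bigl(\widetilde K_t^{(\kappa-1)}(x,u)-\widetilde K_t^{(\kappa)}(x,u)\bigr)\bigr|\,dt.
\end{equation*}
Thanks to \eqref{density}, throughout the relevant range the density of $\gamma_\infty$ is essentially equal to $e^{-R(x_j)}$, so the factor $e^{R(x)}$ appearing in Lemmas \ref{lemma:est_for_diff_op} and \ref{lemma:est_for_diff_op_two} cancels out, and the invariant and Lebesgue measures become interchangeable on $6B_j$.

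Next I would invoke Lemma \ref{lemma:est_for_diff_op} for $\kappa=1$ and Lemma \ref{lemma:est_for_diff_op_two} for $\kappa=2,3$, giving a pointwise control of the integrand of the form $C\,e^{R(x)}\,t^{-\alpha}\exp(-c|u-x|^2/t)$ with $\alpha=(n+1)/2$ (plus a prefactor $1+|x|$) in the first case, and $\alpha=n/2$ in the other two. The $t$-integration is then performed by the standard substitution $s=c|u-x|^2/t$. In the dimensions where the improper integral converges we obtain
\begin{equation*}
 \int_0^{\min(1,|x_j|^{-2})} t^{-(n+1)/2} \exp\!\left(-c\tfrac{|u-x|^2}{t}\right) dt \lesssim |u-x|^{1-n}
\end{equation*}
and, analogously, a bound by $|u-x|^{2-n}$ for $\alpha=n/2$; in the borderline low-dimensional cases the finite upper endpoint $\min(1,|x_j|^{-2})$ produces at worst a logarithmic correction, which causes no harm below.

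Putting everything together produces a pointwise domination of the operator by an integral operator on $6B_j$ with kernel $\Phi_\kappa(x,u)$ satisfying $\Phi_1(x,u)\lesssim (1+|x_j|)\,|u-x|^{1-n}$ and $\Phi_\kappa(x,u)\lesssim |u-x|^{2-n}$ for $\kappa=2,3$. To conclude, I would apply Schur's test. Since $6B_j$ has radius $\simeq (1+|x_j|)^{-1}$, one computes
\begin{equation*}
\int_{6B_j}|u-x|^{1-n}\,du \lesssim (1+|x_j|)^{-1},
\end{equation*}
which absorbs the prefactor $(1+|x_j|)$ appearing for $\kappa=1$; for $\kappa=2,3$ the kernel is even more regular and no prefactor needs to be absorbed. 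The kernels are symmetric enough in $x$ and $u$ for both Schur conditions to hold, uniformly in $j$, so each operator is bounded on $L^p(6B_j)$ for every $1\le p\le\infty$, as claimed.

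The main obstacle is undoubtedly the $\kappa=1$ case: the extra factor $1+|x|\simeq 1+|x_j|$ in Lemma \ref{lemma:est_for_diff_op} is genuinely present and unbounded as $j\to\infty$, and its cancellation is not automatic. It rests on a delicate interplay between the constraint $t\le|x_j|^{-2}$ (without which the lemma itself would fail, since the $|x|^2$ terms in its proof would no longer be absorbable) and the precise radius $(1+|x_j|)^{-1}$ of the supporting ball, which after time-integration contributes exactly one unit of decay in $|x_j|$ through the Riesz-type kernel $|u-x|^{1-n}$. Once this balance is verified, the remaining ingredients are routine.
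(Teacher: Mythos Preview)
Your proposal is correct and follows essentially the same route as the paper: reduce via \eqref{important}, apply Lemmas \ref{lemma:est_for_diff_op} and \ref{lemma:est_for_diff_op_two}, integrate in $t$ by the substitution $s=|u-x|^2/t$, and conclude by a kernel estimate on $6B_j$ that balances the extra factor $1+|x_j|$ against the radius of the ball. The only differences are cosmetic: the paper phrases the last step as convolution with an $L^1$ function rather than as a Schur test, and it spells out (via Fubini and differentiation of an integral in the upper limit) the justification of the swap of $\partial_t$ with the $u$-integral rather than simply invoking \cite[Subsection 5.1]{CCS5}; also, for $\kappa=2,3$ the paper does not bother with the sharper $|u-x|^{2-n}$ bound but just notes that Lemma \ref{lemma:est_for_diff_op_two} is stronger and reuses the $\kappa=1$ argument verbatim.
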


Because of \eqref{weak_domination_OU} and \eqref{domination}, this proposition implies 
\begin{equation}\label{varjumps1}
    J_2^{1,\infty} \left(\widetilde r_j\, \Delta_t^{(\kappa)} \,(f\,r_j):\,0<t\le \min(1,|x_j|^{-2})\right) \lesssim \|f\|_{L^1(4B_j,\gamma_\infty)}, \qquad \;\kappa=1,2,3,
\end{equation}
and for $1<p<\infty$
\begin{equation}\label{varjumpsp}
    J_2^{p} \left(\widetilde r_j\, \Delta_t^{(\kappa)} \,(f\,r_j):\,0<t\le \min(1,|x_j|^{-2})\right) \lesssim \|f\|_{L^p(4B_j,\gamma_\infty)}, \qquad \;\kappa=1,2,3.
\end{equation}

\begin{proof}[Proof of Proposition \ref{pr:difference}]
We start with the case of  $\Delta_t^{(1)}$.
Let  $f \in L^1(\gamma_\infty)$. 
We want to apply an argument like \eqref{important} to the operator 
 $f \mapsto \widetilde r_j\,\Delta_t^{(1)} \,(f\,r_j)$ and its kernel 
 $$
 \widetilde r_j(x)\,\left(K_t(x,u)- \widetilde K_t^{(1)}(x,u)\right)\,r_j(u).
 $$
What must then be verified is only the swap of differentiation and integration in the last step of  \eqref{important}, that is,
\begin{align} \label{swap}
&\frac{\partial}{\partial t}\, \int \widetilde r_j(x)\,\left(K_t(x,u)- \widetilde K_t^{(1)}(x,u)\right)\,r_j(u)\,f(u)\,d\gamma_\infty(u) \notag \\
= &\:\int\widetilde r_j(x)\,\frac{\partial}{\partial t}\, \left(K_t(x,u)- \widetilde K_t^{(1)}(x,u)\right)\,r_j(u)\,f(u)\,d\gamma_\infty(u) 
\end{align}
for  $0 < t< \min(1,|x_j|^{-2}). $
For this, we choose two points $t_0 < t$  in the interval $\left(0,\, \min(1,|x_j|^{-2})\right) $ and replace $t$ by $\tau$ in the right-hand side of \eqref{swap}, which we then integrate $d\tau$ over $t_0 <\tau < t.$ We get
\begin{align} \label{fubini}
  &  \int_{t_0}^{t}\,\int \,\widetilde r_j(x)\,\frac{\partial}{\partial \tau}\, \left(K_{\tau}(x,u) -\widetilde K_{\tau}^{(1)}(x,u)\right)\,r_j(u)\,f(u)\,d\gamma_\infty(u)\,d\tau\\
    =&\:\int\,\int_{t_0}^{t}\,\widetilde r_j(x)\,\frac{\partial}{\partial \tau}\, \left(K_{\tau}(x,u)- \widetilde K_{\tau}^{(1)}(x,u)\right)\,r_j(u)\,f(u)\,d\tau\,d\gamma_\infty(u),
\end{align}
the equality by means of Fubini's theorem. To see that Fubini can  be applied here, we observe that  Lemma \ref{lemma:est_for_diff_op} implies 
\begin{align*}
   &  \int_{t_0}^{t}\,\int\widetilde r_j(x)\,\left|\frac{\partial}{\partial \tau}\, \left(K_{\tau}(x,u)- \widetilde K_{\tau}^{(1)}(x,u)\right)\right|\,r_j(u)\,|f(u)|\,d\gamma_\infty(u)\,d\tau\\
    \lesssim &\: (1+|x|)\,e^{R(x)}\, \int_{t_0}^{t}\frac{d\tau}{\tau^{(n+1)/2}}\, \int|f(u)|\,d\gamma_\infty(u) < \infty.
\end{align*}
Having thus verified \eqref{fubini}, we can rewrite its  right-hand side by a trivial evaluation of the inner integral, and obtain 
\begin{align} 
  &  \int_{t_0}^{t}\,\int\widetilde r_j(x)\,\frac{\partial}{\partial \tau}\, \left(K_{\tau}(x,u)- \widetilde K_{\tau}^{(1)}(x,u)\right)\,r_j(u)\,f(u)\,d\gamma_\infty(u)\,d\tau\\
    =&\:\int\widetilde r_j(x)\,  \big(K_t(x,u)- \widetilde K_t^{(1)}(x,u)\big)\,r_j(u)\,f(u)\,d\gamma_\infty(u) \\
    -&\:\int\widetilde r_j(x)\,   \big(K_{t_0}(x,u)- \widetilde K_{t_0}^{(1)}(x,u)\big)\,r_j(u)\,f(u)\,d\gamma_\infty(u).
\end{align}
Differentiating this equation with respect to $t$, we obtain \eqref{swap} as desired.


We can thus apply  \eqref{important} with the indicated kernel. Together with Lemma \ref{lemma:est_for_diff_op}, this  yields
\begin{align} \label{4.4}
&\|\widetilde r_j(x) \,\Delta_t^{(1)} (f\,r_j)(x)\|_{v(\varrho), (0, \min(1,|x_j|^{-2})]}\\
\lesssim &
\int\,\int_{0}^{\min(1,|x_j|^{-2})}\,\widetilde r_j(x)\,\left|\frac{\partial}{\partial t}\, \left(K_{\tau}(x,u)- \widetilde K_{\tau}^{(1)}(x,u)\right)\right|\,r_j(u)\,dt\,|f(u)|\,d\gamma_\infty(u)\\
\lesssim &\,
(1+|x|)\, \,e^{R(x)} \int \int_0^{\min(1,|x_j|^{-2})} t^{-(n+1)/2 }  \,e^{-c\,\frac{|u-x|^2}t} \,dt\, 
\,\,| f(u)|\,r_j(u)\, d\gamma_\infty (u)\, \mathbbm{1}_{6B_j}(x).
\end{align}
In the inner integral here, we make the transformation $s = |u-x|^2/t$ and find
\begin{align}\label{inner}
 &\int_0^{\min(1,|x_j|^{-2})} t^{-(n+1)/2 }   \,\exp\big(-c\,\frac{|u-x|^2}t\big) \,dt \\
= &
\int_{|u-x|^2/\min(1,|x_j|^{-2})}^{\infty}  |u-x|^{1-n }  \, s^{\frac{n-3}{2}} \, e^{-cs}\,ds.
\end{align}  
If $n>1$, the last integral is at most $C\,|u-x|^{1-n}$. 
But if  $n=1$, it is bounded by 
\begin{equation*}    
C + \log_+\frac {\min(1,|x_j|^{-2})} {|u-x|^2},
\end{equation*}
where
$ \log_+\beta=\max( 0, \beta)$  for $\beta>0$.
We insert these estimates in the last double integral in \eqref{4.4}, after observing that there  
  $u-x \in {4B_j} - {6B_j} \subset B(0, 10/(1+|x_j|))$ and also $e^{R(u)}\simeq e^{R(x)} \simeq e^{R(x_j)}$. Thus we can replace
  $d\gamma_\infty (u)$ by $du$ and delete the factor $e^{R(x)}$, in this integral.
Further,   $1+|x| \lesssim 1+|x_j|$ there.  For  $n>1$ we get

\begin{align*}
\|\widetilde r_j(x) \,\Delta_t^{(1)} (f\,r_j)(x)\|_{v(\varrho), (0, \min(1,|x_j|^{-2})]}
\lesssim (1+|x_j|)\, \int \,|u-x|^{1-n}\,| f(u)|\,r_j(u)\,du\, \mathbbm{1}_{6B_j}(x).  
   \end{align*}
The right-hand side here is essentially a convolution, and we can estimate it by
\begin{align*}
 |f\,r_j|* g(x),              
 \end{align*} 
 where
  $g(y)= C\,(1+|x_j|)\,|y|^{1-n }\,\mathbbm{ 1}_{B(0, 10/(1+|x_j|))}(y)$, when $n>1.$
  
For $n=1$ we arrive at a similar convolution, but now
\begin{equation*}    
g(y) = (1+|x_j|)\, \left(C+2\log_+\frac {\min(1,|x_j|^{-1})} {|y|}\right)\,\mathbbm{ 1}_{B(0, 10/(1+|x_j|))}(y).
\end{equation*}

  In both cases, we find that 
  $\|g \|_{L^1(dy)} \lesssim 1$, and so convolution by $g$ defines a bounded operator on all $L^p(dx)$ spaces.  
  
This proves Proposition \ref{pr:difference} for  $\kappa = 1$.

 The cases  $\kappa = 2,\;3$ can be done in exactly the same way, since, as already observed,  Lemma \ref{lemma:est_for_diff_op_two} gives a stronger estimate than Lemma \ref{lemma:est_for_diff_op}.

  The proposition is proved. 
 \end{proof}

{\Red{

}}

\bigskip

    \section{The local case for small $t$     
    : the main operator    $M_t$
    }\label{s:The main operator}

We are left with the estimate of the main part  $M_tf$ in \eqref{eq:sum_Ht}, that is, the integral operator with the kernel 
 $ \widetilde  K_t^{(3)}$ defined in \eqref{def:tildeKt_three}.
 
\begin{proposition}\label{prop:Mt}
Let $j \in \mathbb N$. For all $p>1$ one has 
\begin{equation}\label{stima_pp_Mt} 
J_2^p
\left(\widetilde r_j\, M_{t}(f\,r_j):0<t\le \min(1,|x_j|^{-2})
\right)
\lesssim_{\color{black}p}\|f\,r_j\|_{L^p(\gamma_\infty)}
\end{equation} 
 and, moreover, 
\begin{equation}\label{stima_1inf_Mt} 
J_2^{1,\infty}
\left(\widetilde r_j\, M_{t}(f\,r_j):0<t\le \min(1,|x_j|^{-2})\right)\lesssim\|f\,r_j\|_{L^1(\gamma_\infty)}.
\end{equation}
The implicit constants in  these inequalities are uniform in $j$.
\end{proposition}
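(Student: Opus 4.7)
The strategy is to recognize that, up to a $t$-independent factor, $M_t$ is exactly a Gaussian convolution operator, and then invoke the known jump inequalities for the heat semigroup on $\R^n$ at $\varrho = 2$ from \cite{Liu}. The reason this works smoothly is that the only $t$-dependence in $\widetilde K_t^{(3)}(x,u)$ sits in the factor $t^{-n/2}\exp\bigl(-|Q^{-1/2}(u-x)|^2/(2t)\bigr)$, which depends on $u-x$ alone, while the factor $e^{R(x)}$ is $t$-independent and the factor $e^{-R(u)}$ comes for free from the density of $\gamma_\infty$.

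\textbf{Step 1 (convolution structure).} Combining \eqref{def:tildeKt_three} with the explicit density in \eqref{def:inv_meas}, for a dimensional constant $c_n$,
\begin{equation*}
\widetilde r_j(x)\, M_t(fr_j)(x) = c_n\, \widetilde r_j(x)\, e^{R(x)} \int h_t(x-u)\, g(u)\, du,
\end{equation*}
where $h_t(v) = t^{-n/2}\exp\bigl(-|Q^{-1/2}v|^2/(2t)\bigr)$ and $g(u) := f(u)\,r_j(u)\,e^{-R(u)}$. The kernel $h_t$ is, up to a constant and a linear change of variables $y=Q^{-1/2}v$, the standard Gauss--Weierstrass heat kernel on $\R^n$.

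\textbf{Step 2 (transferring jumps).} Set $A(x) := c_n\,\widetilde r_j(x)\,e^{R(x)}$, which is independent of $t$. For any $x$ with $A(x)\ne 0$ one has the exact identity
\begin{equation*}
N_\lambda\bigl(\widetilde r_j M_t(fr_j)(x):t\in I\bigr) = N_{\lambda/A(x)}\bigl((h_t*g)(x):t\in I\bigr) \le N_{\lambda/A(x)}\bigl((h_t*g)(x):t>0\bigr),
\end{equation*}
where the last step uses monotonicity of $N_\lambda$ in the time interval. By \eqref{density} and the bounded overlap of the $6B_j$, $A(x)\simeq e^{R(x_j)}$ on $\mathrm{supp}\,\widetilde r_j \subset 6B_j$, uniformly in $j$.

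\textbf{Step 3 (norm comparison and heat-semigroup jump bound).} Since every function in question is supported in $6B_j$, where the density of $\gamma_\infty$ is essentially $e^{-R(x_j)}$, $L^p(\gamma_\infty)$ and $L^p(dx)$ norms differ by a factor $e^{-R(x_j)/p}$, and likewise for $L^{1,\infty}$. Substituting $\lambda' = \lambda/A(x)\simeq \lambda\, e^{-R(x_j)}$, the factors $e^{R(x_j)}$ coming from $A(x)$ and $e^{-R(x_j)}$ coming from the change of measure cancel exactly, giving, uniformly in $j$,
\begin{equation*}
J_2^{p}\bigl(\widetilde r_j M_t(fr_j):t\in I\bigr) \lesssim J_2^{p}\bigl(h_t*g:t>0\bigr),\qquad 1\le p<\infty,
\end{equation*}
and the analogous statement with $J_2^{1,\infty}$ on both sides. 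By the results of \cite{Liu} on jump quasi-seminorms for convolution semigroups (applied to the Gauss--Weierstrass semigroup generated by $\tfrac12\,\mathrm{tr}(Q\nabla^2)$, after the change of variables $y=Q^{-1/2}x$), one has
\begin{equation*}
J_2^{p}\bigl(h_t*g:t>0\bigr) \lesssim \|g\|_{L^p(du)},\qquad J_2^{1,\infty}\bigl(h_t*g:t>0\bigr) \lesssim \|g\|_{L^1(du)}.
\end{equation*}
Finally, $\|g\|_{L^p(du)} = \|fr_j\,e^{-R}\|_{L^p(du)} \simeq e^{-R(x_j)(1-1/p)}\|fr_j\|_{L^p(\gamma_\infty)}$ on $\mathrm{supp}\,r_j \subset 4B_j$, and tracking the powers of $e^{R(x_j)}$ through the argument shows that all such factors cancel, yielding \eqref{stima_pp_Mt} and \eqref{stima_1inf_Mt}.

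\textbf{Main obstacle.} Nothing in this outline is genuinely hard; the only point to verify carefully is the bookkeeping of the $e^{R(x_j)}$-factors arising from $A(x)$, from the change of measure, and from rewriting $g$ in terms of $fr_j$. The conceptual input is the exact convolution identity in Step~1, which is available precisely because $\widetilde K_t^{(3)}$ depends on $u,x$ only through the $t$-independent prefactor $e^{R(x)}$ and the translation-invariant Gaussian in $u-x$; this is what fails for $K_t$ itself and explains why the difference operators $\Delta_t^{(\kappa)}$ had to be peeled off first.
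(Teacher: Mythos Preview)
Your proof is correct and follows essentially the same strategy as the paper's own argument: rewrite $M_t$ as $e^{R(x)}$ times a Gaussian convolution acting on $fr_je^{-R(\cdot)}$, transfer the jump count via the $t$-independent prefactor, invoke \cite[Theorem 2.4]{Liu}, and verify that the powers of $e^{R(x_j)}$ cancel. One small correction: your claim that $A(x)\simeq e^{R(x_j)}$ on $\mathrm{supp}\,\widetilde r_j$ is false as stated, since $\widetilde r_j(x)$ can be arbitrarily close to $0$ near the boundary of its support; however only the upper bound $A(x)\lesssim e^{R(x_j)}$ is actually needed (because $N_\lambda$ is decreasing in $\lambda$, so $N_{\lambda/A(x)}\le N_{c\lambda e^{-R(x_j)}}$), and with that adjustment your bookkeeping goes through exactly as in the paper.
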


\begin{proof}
Let   for $t>0$
\[
\psi_t(y)=\Big(\frac{
 \det Q_\infty}{\det Q}\Big)^{1/2}\,  
{t^{-n/2}}\,\exp\big(-\frac1{2t}\,|Q^{-1/2}\,y|^2\big), \qquad y\in\R^n.
\]
             Then
\begin{align*} 
 M_{t}(f\,r_j)(x)&=   e^{R(x)}
\Big( \psi_t* \big(fr_je^{-R(\cdot)}\big)\Big)(x).
\end{align*}
 for $f \in L^1(\gamma_\infty)$.  
We recall that  $e^{R(x)}\simeq e^{R(x_j)}$ for  $x \in 6 B_j \supset \mathrm{supp} \, \widetilde r_j $.

If $  \widetilde r_j(x)\, M_{t}(f\,r_j)(x) = \widetilde r_j(x)\,e^{R(x)}
\left( \psi_t* \big(fr_je^{-R(\cdot)}\big)\right)(x)$ has a jump of amplitude larger than $\lambda $ for some $x \in \mathrm{supp} \, \widetilde r_j$,
 then
 $\psi_t* \big(fr_je^{-R(\cdot)}\big)(x)$ has a jump of amplitude larger than $c\,\lambda \, e^{-R(x_j)}$ for some $c>0$.
 As before, the jumps are in the variable $t$.
Thus
\begin{align}\label{intermediate_psi} 
{N_\lambda \big(\widetilde r_j(x)\,M_{t}(f\,r_j)(x)}\big)
\le
N_{c\,\lambda  e^{-R(x_j)}}\left(
 \psi_t* \big(fr_je^{-R(\cdot)}\big)\right)(x)\, \mathbbm 1_{6B_j}(x),
\end{align}
and  for all $\alpha>0$,
\begin{align*}
& \gamma_\infty\left\{ x\in 6 B_j:\,
 \lambda \,\sqrt{ N_\lambda \left(\widetilde  r_j(x)\, M_{t}(f\,r_j)\right)(x)}>\alpha\right\}\\
 &\lesssim
\gamma_\infty\left\{ x\in 6 B_j:\,    \lambda\, \sqrt{N_{c\lambda  e^{-R(x_j)}}\, \left(\psi_t *\big(fr_je^{-R(\cdot)}\big)\right)(x) }>\alpha \right\}
 \\&=
\gamma_\infty\left\{ x\in 6 B_j:\, c\,
 e^{-R(x_j)}\, \lambda \, \sqrt{N_{c\lambda  e^{-R(x_j)}}\, \left(\psi_t *\big(fr_je^{-R(\cdot)}\big)\right)(x)} > c \, e^{-R(x_j)} \, \alpha \,\right\}
 \\&\simeq
 e^{-R(x_j)}
\left|\left\{ x\in 6 B_j:\,  c 
 e^{-R(x_j)} \,\lambda \,\sqrt{N_{c\lambda  e^{-R(x_j)}}\, \left(\psi_t *\big(fr_je^{-R(\cdot)}\big)\right)(x)} > \, c\, e^{-R(x_j)}\, \alpha \right\}\right|.
\end{align*}

Now we  apply  Theorem 2.4 in \cite{Liu} to the last expression here, getting 
\begin{align*}
\gamma_\infty & \Big\{ x\in 6 B_j\,:\, 
 \lambda \,\sqrt{ N_\lambda \left(\widetilde  r_j(x)\,M_{t}(f\,r_j)\right)(x)}>\alpha\Big\}  \\
 \lesssim & \,
\frac{ e^{-R(x_j)}}{ e^{-R(x_j)}\,\alpha} \,
\left\|\psi_t *\big(fr_je^{-R(\cdot)}\big)\right\|_{L^1(du)}
\lesssim \,\frac{1}{\alpha}\,  \|f\,r_j\,e^{-R(\cdot)}\|_{L^1(du)}
 = \frac{1}{\alpha}\, \|f\,r_j\|_{L^1(\gamma_\infty)},
 \end{align*}
where we also used  the fact that  convolution with   $\psi_t $ defines a bounded operator on $L^1(du)$. 
 This is uniform in $\lambda>0$  and in $j$, and proves \eqref{stima_1inf_Mt}.

We now consider the strong type $(p,p)$ for $1<p<\infty$. Starting from \eqref{intermediate_psi},
 moving to Lebesgue measure  and then applying    \cite[Theorem 2.4]{Liu}, one finds 
\begin{align*} 
&\norm{\, \lambda\sqrt{N_\lambda \big(\widetilde  r_j(\cdot)\,M_{t}(f\,r_j)\big)(\cdot)}\,}_{L^p(6 B_j, \gamma_\infty)}
\le \norm{\, \lambda\sqrt{
N_{c\,\lambda  e^{-R(x_j)}}\left(
\big( \psi_t* \big(fr_je^{-R(\cdot)}\big)\right)
}}_{L^p(6 B_j, \gamma_\infty)}
\\
&=
c^{-1}
e^{R(x_j)}\norm{\,c 
 e^{-R(x_j)}  \lambda\sqrt{
N_{c\,\lambda  e^{-R(x_j)}} \left(
\big( \psi_t* \big(fr_je^{-R(\cdot)}\big)\right)}\,}_{L^p(6 B_j, \gamma_\infty)}\\
&\simeq_{\color{black}p} 
e^{R(x_j)}
\, e^{-R(x_j)/p} 
\norm{\,c 
 e^{-R(x_j)}  \lambda\sqrt{
N_{c\,\lambda  e^{-R(x_j)}} \left(
\big( \psi_t* \big(fr_je^{-R(\cdot)}\big)\right)}\,}_{L^p(6 B_j, dx)}\\
&
\lesssim_p 
e^{R(x_j)}
\, e^{-R(x_j)/p} 
\left(\int\big|
 f(u)\,r_j(u)\,e^{-R(u)}\big|^p du\right)^{1/p}
\simeq_p 
\| f\,r_j \|_{L^p(\gamma_\infty)},
\end{align*}
uniformly in $\lambda>0$ and $j$. This is \eqref{stima_pp_Mt}, and Proposition \ref{prop:Mt} is proved.
 \end{proof}
 
 Now the remaining part of Theorem 5.2 with $t \in (0, \min(1,|x_j|^{-2})]$ follows as a combination of 
 \eqref{varjumps1} and \eqref{stima_1inf_Mt}. Theorem \ref{thm} is completely proved.

\end{document}

The critical issues are concentrated in   $M_t$, and one notices  that 
this operator basically coincides, up to some constants and the factor $e^{R(x)}$, 
            with the operator with kernel  $K_t^c(x-u)$ introduced in \cite[formula (8.3)]{CCS8}.

The results in  \cite[Section 8]{CCS8} imply that  
the variation operator
  \begin{equation*}
    f \mapsto \|M_t f\|_{v(2), \R_+}                             
  \end{equation*}
  is not of strong nor weak type $(p,p)$ with respect to $\gamma_\infty$,  for any $p \in [1,\infty)$.
However,
 we shall prove in Section \ref{s:The main operator} that jump 
 quasi-seminorms with $\varrho = 2$ behave well for  $M_t$. 

\begin{remark}
 {\Red{The main point of the technique adopted in this paper consists in splitting
 the local part of the semigroup,  for which no variational bound  holds in the range $\varrho\in[1,2]$, into several addenda, when $t$ is small.  
 In this context, the term which is responsible for the unboundedness of the $\varrho$-variation is $M_t$, but  since it is  very close to the heat semigroup,
we can  still prove  a jump inequality. 
Afterwards, it is easy to get rid of 
the remaining addenda 
 through a repeated application of \eqref{important}. }}
 \end{remark}